\documentclass[a4paper,12pt]{article}
\usepackage[top=3cm,bottom=3cm,left=2.5cm,right=2.5cm]{geometry}
\usepackage{t1enc}
\usepackage[latin1]{inputenc}
\usepackage[english]{babel}
\usepackage{amsmath,amsthm}
\usepackage{amsfonts}
\usepackage{latexsym}
\usepackage[dvips]{graphicx}
\usepackage{graphicx}
\usepackage{color}
\usepackage{amsmath}
\usepackage{amssymb}
\usepackage{amsbsy}
\usepackage{amsfonts}
\usepackage{amsfonts,mathrsfs}

\newtheorem{theorem}{Theorem}
\newtheorem{lemma}[theorem]{Lemma}

\begin{document}

\title{Maximum first Zagreb index of orientations of unicyclic graphs with given matching number}
\author{ Jiaxiang Yang, Hanyuan Deng\thanks{Corresponding author: hydeng@hunnu.edu.cn}\\
{\small Key Laboratory of Computing and Stochastic Mathematics (Ministry of Education),}
 \\{\small School of Mathematics and Statistics, Hunan Normal University,}
 \\{\small  Changsha, Hunan 410081, P. R. China.}}

\date{}
\maketitle

\begin{abstract}

 Let $D=(V,A)$  be a digraphs without isolated vertices. The first Zagreb index of a digraph $D$ is defined as a summation over all arcs,
$M_1(D)=\frac{1}{2}\sum\limits_{uv\in A}(d^{+}_{u}+d^{-}_v)$,
where $d^{+}_u$(resp. $d^{-}_u$) denotes the out-degree (resp. in-degree) of the vertex $u$.
In this paper, we give the maximal values and maximal digraphs of first Zagreb index over the set of all orientations of unicyclic graphs with $n$ vertices and matching number $m$ $(2\leq m\leq \lfloor \frac{n}{2}\rfloor)$.
\noindent
%\textbf{AMS classification}: 05C40, 05C90\\
\\\indent {\bf Keywords}: first Zagreb index; orientations of unicyclic graphs; matching number.
\end{abstract}

\maketitle

%%%%%% THIS PART MUST BE PLACED IMMEDIATELY AFTER THE \maketitle COMMAND
%%%%%% BACK TO ORIGINAL FOOTNOTES
\makeatletter
\renewcommand\@makefnmark%
{\mbox{\textsuperscript{\normalfont\@thefnmark)}}}
\makeatother
%%%%%%

\baselineskip=0.25in

\section{Introduction}
 The first Zagreb index was first appeared in \cite {IG72,IG75}, and it is an important molecular descriptor which is related with many chemical properties. The first Zagreb index have been used in the study of molecular complexity, chirality, ZE-isomerism and heterosystems whilst the  Zagreb indices played a  potential role in applicability for deriving multilinear regression models. Zagreb indices are also used by  researchers in the studies of QSPR and QSAR \cite {SN03}. During the past decades, results closely correlated with  the Zagreb indices have published in \cite {AA18,BB17,KC03,KC04,KC10,KC09,BZ07}.
 %We encourage the reader to consult \cite{bkmr,dg,deng,dsa,eg,fs,jjt,ytl} for the historical background, computational techniques, and mathematical properties of the second Zagreb index.

 We denote by $G=(V,E)$  a simple connected graph, where $V(G)$ is the vertex set of $G$ and $E(G)$ is the edge set of $G$. The first Zagreb index of $G$ is defined as
$$M_1(G)=\sum_{uv\in E(G)}(d_{G}(u)+d_{G}(v))$$
where $d_{G}(v)$ ($d_v$ for short) is the degree of vertex $v$ in $G$.

For any $v\in V(G)$, let $N_{G}(v) =\{u|uv\in E(G)\}$ be the neighbors of $v$,  and $d_{G}(v)\doteq |N_{G}(v)|$ the degree of $v$ in $G$.
 For $E'\subseteq E(G)$,  $G-E^{'}$ denotes the subgraph of $G$ obtained by deleting the edges of $E'$. Let $W\subseteq V(G)$, we denote by $G-W$  the subgraph of $G$ obtained by deleting the vertices of $W$ and the edges incident with them.
A matching $M$ of the graph $G$ is a subset of $E(G)$ such that no two edges in $M$ share a common vertex.  A matching $M$ of $G$ is  maximum, if  $|M_1| \leq |M|$ for any other matching $M_1$ of $G$. The matching number of $G$ is the number of edges
of a maximum matching in $G$. If $M$ is a matching of a graph $G$ and vertex $v\in V(G)$
is incident with an edge of $M$, then $v$ is said to be $M$-saturated, and if any $v\in V(G)$ is $M$-saturated, then $M$ is a perfect matching.
%A formal definition of a VDB topological index is as follows. Let $\mathcal{G}_n$ be the set of graphs with $n$ non-isolated vertices. Consider the set
%$$K =\{(i,j)\in \mathbf{N}\times \mathbf{N}: 1\leq i\leq j\leq n-1\}$$
%and for a graph $G\in \mathcal{G}_n$, denote by $m_{i,j}(G)$ the number of edges in $G$ joining vertices of degree $i$ and $j$. A VDB topological index over $\mathcal{G}_n$ is a function $\varphi: \mathcal{G}_n\rightarrow \mathbf{R}$ induced by real numbers $\{\varphi_{i,j}\}_{(i,j)\in K}$ defined as
%$$\varphi(G)=\sum_{(i,j)\in K}m_{i,j}(G)\varphi_{i,j}$$
%for every $G\in \mathcal{G}_n$.

%Many important topological indices are obtained from different choices of $\varphi_{i,j}$. For example, the first Zagreb index $M_1$ induced by numbers $\varphi_{i,j}=i+j$; the second Zagreb index $M_2$ induced by $\varphi_{i,j}=ij$, et al. For details on VDB topological indices, see [].
A digraph $D=(V,A)$ is an ordered pair $(V,A)$ consisting of a non-empty finite set $V$ of vertices and a
finite set $A$ of ordered pairs of distinct vertices called arcs (in particular, $D$ has no loops). Let $uv\in A$, we denote by $uv$  an
arc from vertex $u$ to vertex $v$. The vertex $u$ is the tail of $uv$, and
the vertex $v$ is its head.   $d^{+}_u$ (resp.$d^{-}_u$) denotes the out-degree (resp. in-degree) of a vertex $u$ which is the
number of arcs with tail $u$ (resp. with head $u$). If $u\in V$ and  $d^{+}_u=d^{-}_u=0$ ,then $u$ is called an isolated
vertex. $D_n$ denotes the set of all digraphs with $n$ non-isolated vertices.
The first Zagreb index of a digraph $D$ defined as
$$M_1(D)=\frac{1}{2}\sum_{uv\in A}(d^{+}_{u}+d^{-}_v)$$
where $d^{+}_u$(resp. $d^{-}_u$) denotes the out-degree(resp.in-degree) of the vertex $u$.
If $u\in V(D)$ and $d^{+}_u=0$ (resp. $d^{-}_u=0$), then $u$ is called a sink vertex (resp. source vertex).
An oriented graph $D$ is obtained from a graph $G$ by replacing each edge $uv$ of $G$ by an arc $uv$ or $vu$, but not both. In this case $D$ is also called an orientation of $G$. Let $\mathcal{O}(G)$ be the set of all orientations of $G$.
$D\in \mathcal{O}(G)$,if $d^{+}_u=0$ or $d^{-}_u=0$ for any $u\in V(D)$, then $D$ is called a
sink-source orientation of $G$.\\
\indent In order to better study of vertex-degree-based topological indices.   Recently, J. Monsalve and J. Rada \cite {JM211} extended the concept of vertex-degree based topological indices of graphs to oriented graphs.
the authors determined the extremal values of the Randi$\acute{c}$ index over $\mathcal{OT}(n)$, the set of all oriented trees with $n$ vertices. Also,
the authors given the extremal values of the Randi$\acute{c}$ index over $\mathcal{O}(P_n)$,$\mathcal{O}(C_n)$ and $\mathcal{O}(H_d)$, where $P_n$ is the path with $n$, $P_n$ is the cycle
with $n$ vertices and $H_d$ is the hypercube of dimension $d$, respectively.
%H,Deng \cite {HD22} give the extremal values and extremal digraphs of VDB topological indices over all digraphs with $n$ non-isolated vertices, such as the Randi$\acute{c}$ index, the Zagreb index, the sum-connectivity index, the $\mathcal{GA}$ index, the $\mathcal{ABC}$ index and the harmonic index.
J. Monsalve and J. Rada \cite {JM212}
%found the extremal values and extremal digraphs of AZ index over all digraphs with $n$ non-isolated vertices. And
found extremal values of symmetric VDB topological
indices over $\mathcal{OT}(n)$ and $\mathcal{O}(G)$, respectively. But the maximum value of $\mathcal{AZ}$ over $\mathcal{OT}(n)$ is still an open problem.

In this paper, we present the maximal first Zagreb  index for orientations of unicyclic graphs with $n$ vertices and
matching number $m$ ($2\leq m\leq \lfloor\frac{n}{2}\rfloor$), and
we state the results as follows:\\
\indent Let $n$ and $m$ be integers and  $2\leq m\leq \lfloor\frac{n}{2}\rfloor$,  $U(n,m)$  the class of unicyclic graphs on $n$ vertices with matching number $m$, and  $U_{n,m}$  the graph formed by attaching $n-2m+1$ pendent vertices and $m-2$ paths of length 2 to a (common) vertex of a triangle. Let $U^{(1)}_{n,m},U^{(2)}_{n,m},U^{(3)}_{n,m},U^{(4)}_{n,m}$ be  four orientations of $U_{n,m}$ (see Figure $\ref{fig-3}$). Obviously, $U_{n,m}\in U(n,m)$. Let $C_n$ be the the cycle with $n$ vertices.  $U^{*}_{4,2}=\{U^{(1)}_{4,2},U^{(2)}_{4,2},U^{(3)}_{4,2},U^{(4)}_{4,2},U^{(5)}_{4,2},U^{(6)}_{4,2}\}$, where $U^{(5)}_{4,2}$ and $U^{(6)}_{4,2}$ are the sink-sourse orientations of $C_4$. $U^{*}_{6,3}=\{U^{(1)}_{6,3}, U^{(2)}_{6,3}, U^{(3)}_{6,3}, U^{(4)}_{6,3}, U^{(5)}_{6,3},\\
 U^{(6)}_{6,3}\}$, where $U^{(5)}_{6,3}$ and $U^{(6)}_{6,3}$ are the sink-sourse orientations of the graph formed by attaching two pendant vertices to two adjacent vertices of $C_4$. $U^{*}_{n,m}=\{U^{(1)}_{n,m},U^{(2)}_{n,m},U^{(3)}_{n,m},U^{(4)}_{n,m}\}$, where $(n,m) \neq(4,2),(6,3)$.

\begin{figure}[ht]
\begin{center}
  \includegraphics[width=15cm,height=3cm]{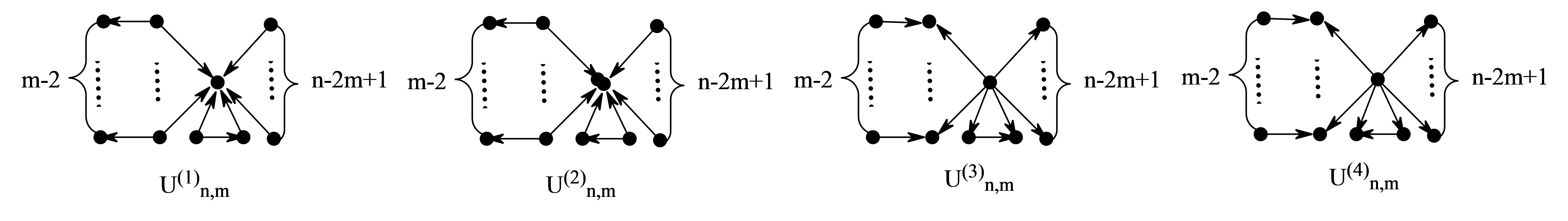}
     \end{center}
\vskip -0.5cm
\caption{ Four orientations of $U_{n,m}$ :$U^{(1)}_{n,m}$,$U^{(2)}_{n,m}$,$U^{(3)}_{n,m}$,$U^{(4)}_{n,m}$ .}\label{fig-3}
\end{figure}

\begin{theorem}\label{the11}
 Let $G\in U(n,m)$ with $2\leq m\leq \lfloor\frac{n}{2}\rfloor$, $D\in \mathcal{O}(G)$. Then
$$M_{1}(D)\leq \frac{1}{2}\left[n^{2}+(-2m+3) n+m^{2}+m-2\right]$$
with equality if and only if
$D\in U^{*}_{n,m}$.
\end{theorem}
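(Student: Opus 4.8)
The plan is to reduce the statement to a known extremal inequality for the ordinary first Zagreb index. First I would rewrite $M_1(D)$ by grouping the summands of $\tfrac12\sum_{uv\in A}(d^+_u+d^-_v)$ according to tail and head: a vertex $u$ is the tail of exactly $d^+_u$ arcs and the head of exactly $d^-_u$ arcs, so
\[
M_1(D)=\frac12\sum_{v\in V}\bigl((d^+_v)^2+(d^-_v)^2\bigr).
\]
Since $D$ is an orientation of $G$ one has $d^+_v+d^-_v=d_G(v)$, hence $(d^+_v)^2+(d^-_v)^2=d_G(v)^2-2\,d^+_v d^-_v$ and
\[
M_1(D)=\frac12 M_1(G)-\mu(D),\qquad \mu(D):=\sum_{v\in V}d^+_v d^-_v\ge 0 .
\]
The number $\mu(D)$ is a non-negative integer, and $\mu(D)=0$ holds exactly when every vertex is a sink or a source, i.e. when $D$ is a sink-source orientation; such an orientation exists if and only if $G$ is bipartite, i.e. (for unicyclic $G$) if and only if the unique cycle of $G$ is even. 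Consequently $M_1(D)\le\tfrac12 M_1(G)$ in general, and $M_1(D)\le\tfrac12 M_1(G)-1$ whenever the cycle of $G$ is odd.

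Next I would invoke (or reprove, via the standard pendant-shifting and cycle-shrinking transformations) the extremal bound for the ordinary first Zagreb index over $U(n,m)$, namely $M_1(G)\le M_1(U_{n,m})=(n-m+1)^2+n+3m-1$ with equality iff $G\cong U_{n,m}$, together with its sharpening for bipartite members of $U(n,m)$: $M_1(G)\le M_1(U_{n,m})-2$, with equality precisely for the exceptional pairs $(n,m)\in\{(4,2),(6,3)\}$ and the corresponding extremal graphs ($C_4$, respectively the graph obtained by attaching two pendant vertices to adjacent vertices of $C_4$). Granting these, the proof of the displayed inequality splits on the parity of the cycle of $G$: if it is odd then $M_1(D)\le\tfrac12 M_1(G)-1\le\tfrac12 M_1(U_{n,m})-1$; if it is even (so $G$ is bipartite) then $M_1(D)\le\tfrac12 M_1(G)\le\tfrac12(M_1(U_{n,m})-2)=\tfrac12 M_1(U_{n,m})-1$. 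A direct computation gives $\tfrac12 M_1(U_{n,m})-1=\tfrac12\bigl[n^2+(-2m+3)n+m^2+m-2\bigr]$, which is the bound in the theorem.

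For the equality characterization, note that in the odd-cycle case equality forces $M_1(G)=M_1(U_{n,m})$ (so $G\cong U_{n,m}$) and $\mu(D)=1$, while in the even-cycle case it forces $M_1(G)=M_1(U_{n,m})-2$ and $\mu(D)=0$. The latter can only occur for $(n,m)\in\{(4,2),(6,3)\}$, where $G$ is the exceptional graph and $D$ is one of its sink-source orientations; these are $U^{(5)}_{n,m},U^{(6)}_{n,m}$. For the former I would enumerate the orientations $D$ of $U_{n,m}$ with $\mu(D)=1$: such a $D$ has a unique ``mixed'' vertex $v$ (with $d^+_v\ge1$ and $d^-_v\ge1$), and $\mu(D)=1$ forces $\min(d^+_v,d^-_v)=1$. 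A mixed vertex has degree $\ge2$, so it is the centre $v_0$ of $U_{n,m}$, one of the other two triangle vertices $a,b$, or the internal vertex of one of the attached paths of length $2$; but if $v=v_0$ then $\mu(D)\ge d_G(v_0)-1=n-m\ge2$, and if $v$ is such an internal vertex then the triangle necessarily contains a mixed vertex of its own, so in both cases $\mu(D)\ge2$. Hence $v\in\{a,b\}$ and $v$ is a transit vertex on the triangle; once one fixes which of $a,b$ it is and the orientation of its two incident triangle edges, every other arc is forced ($v_0$ becomes a source or a sink and the pendant vertices and attached paths are oriented accordingly), leaving exactly the four orientations $U^{(1)}_{n,m},\dots,U^{(4)}_{n,m}$. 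Combining the two cases gives $U^{*}_{n,m}$.

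The main obstacle I anticipate is the graph-theoretic extremal input of the second step: establishing $M_1(G)\le M_1(U_{n,m})$ over $U(n,m)$ (if it is not simply citable) and, more delicately, its refinement $M_1(G)\le M_1(U_{n,m})-2$ for bipartite $G\in U(n,m)$ together with the identification of $(4,2)$ and $(6,3)$ as the only cases of equality. Once that input is secured, the remainder follows cleanly from the identity $M_1(D)=\tfrac12 M_1(G)-\mu(D)$ and the short analysis of $\mu(D)=1$ above.
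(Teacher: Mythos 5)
Your reduction rests on the identity $M_1(D)=\tfrac12 M_1(G)-\mu(D)$ with $\mu(D)=\sum_v d^+_v d^-_v$, which is a clean strengthening of the paper's Lemma \ref{lem2} (the paper only records $M_1(D)\le\tfrac12 M_1(G)$ with equality for sink--source orientations, i.e.\ the case $\mu(D)=0$). This is a genuinely different route from the paper, which never passes through an undirected extremal theorem for $U(n,m)$: instead it proves the perfect-matching case $n=2m$ by induction directly on the digraphs (deleting a pendant edge or a pendant path and tracking how out-/in-degrees change), and then inducts on $n$ by deleting an unsaturated pendant vertex (Lemma \ref{lem9}). Your arithmetic checks out: $M_1(U_{n,m})=(n-m+1)^2+n+3m-1$ and $\tfrac12 M_1(U_{n,m})-1$ equals the stated bound, and your classification of the orientations of $U_{n,m}$ with $\mu(D)=1$ (the mixed vertex must be one of the two degree-two triangle vertices, after which the orientation is forced) is correct and accounts for $U^{(1)}_{n,m},\dots,U^{(4)}_{n,m}$.

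The gap is in the extremal input you defer to, and it is not only that it is unproved: one of the two statements is false as you state it. The uniqueness claim ``$M_1(G)\le M_1(U_{n,m})$ with equality iff $G\cong U_{n,m}$'' fails for $(n,m)=(6,3)$: the graph $G_1$ obtained by attaching a pendant vertex to each vertex of a triangle lies in $U(6,3)$ and has degree sequence $(3,3,3,1,1,1)$, so $M_1(G_1)=30=M_1(U_{6,3})$. Consequently your equality analysis in the odd-cycle case (``equality forces $M_1(G)=M_1(U_{n,m})$, hence $G\cong U_{n,m}$'') breaks exactly at one of the exceptional parameter pairs the theorem is built around; to close it you must additionally show that every orientation of $G_1$ has $\mu(D)\ge 2$ (true: in any orientation of the triangle some triangle vertex $v_i$ has an in-arc and an out-arc on the cycle, and since $d_{G_1}(v_i)=3$ this already gives $d^+_{v_i}d^-_{v_i}\ge 2$ --- this is what the paper's Lemma \ref{lem7} establishes by enumerating all $64$ orientations). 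Beyond that, the two undirected facts you rely on --- the maximality of $U_{n,m}$ over $U(n,m)$ together with the \emph{complete} list of graphs attaining it, and the bipartite refinement $M_1(G)\le M_1(U_{n,m})-2$ with $(4,2)$ and $(6,3)$ as the only tight cases --- carry essentially all of the difficulty of the theorem and are not citable in the form you need; until they are proved (including correct equality characterizations), the argument is a reduction rather than a proof.
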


\indent Specially, if $n=2m$, we have

\begin{theorem}\label{the8}%8
  Let $G \in U(2m, m)$ with $m\geq 2$, $D\in \mathcal{O}(G)$.
 Then $$M_1(D)\leq \frac{1}{2}[m^2+7m-2]$$
with equality if and only if $G\in U^{*}_{2m,m} $.
\end{theorem}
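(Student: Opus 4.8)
The plan is to obtain Theorem~\ref{the8} directly as the special case $n=2m$ of Theorem~\ref{the11}. The first point to check is that the hypotheses are compatible: when $n=2m$ we have $\lfloor n/2\rfloor=m$, so the condition $2\le m\le\lfloor n/2\rfloor$ appearing in Theorem~\ref{the11} reduces exactly to $m\ge2$. Hence, for every $m\ge2$, every $G\in U(2m,m)$ and every $D\in\mathcal{O}(G)$, Theorem~\ref{the11} applies with $n=2m$ and gives $M_1(D)\le\frac{1}{2}\left[n^{2}+(-2m+3)n+m^{2}+m-2\right]$.

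Next I would simplify the right-hand side under the substitution $n=2m$. Since $n^{2}=4m^{2}$ and $(-2m+3)n=-4m^{2}+6m$, the quadratic terms cancel and
$$\frac{1}{2}\left[n^{2}+(-2m+3)n+m^{2}+m-2\right]=\frac{1}{2}\left[4m^{2}-4m^{2}+6m+m^{2}+m-2\right]=\frac{1}{2}\left[m^{2}+7m-2\right],$$
which is precisely the bound asserted in Theorem~\ref{the8}. For the extremal graphs, Theorem~\ref{the11} states that equality holds if and only if $D\in U^{*}_{n,m}$, and with $n=2m$ this means $D\in U^{*}_{2m,m}$. Unwinding the definition of $U^{*}_{n,m}$: for $m\ge4$ this family is $\{U^{(1)}_{2m,m},U^{(2)}_{2m,m},U^{(3)}_{2m,m},U^{(4)}_{2m,m}\}$; for $m=2$ it additionally contains the two sink-source orientations of $C_4$; and for $m=3$ it additionally contains the two sink-source orientations of the graph obtained from $C_4$ by attaching one pendant vertex at each of two adjacent vertices. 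In every case the conclusion is $D\in U^{*}_{2m,m}$, which is the equality characterization in Theorem~\ref{the8} (the statement writes ``$G\in U^{*}_{2m,m}$'', to be read as ``$D\in U^{*}_{2m,m}$'', since $U^{*}_{2m,m}$ is a family of oriented graphs).

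Since the argument is a one-line specialization of Theorem~\ref{the11}, there is essentially no obstacle; the only things that need checking are the algebraic cancellation displayed above and the collapse of the admissible range $2\le m\le\lfloor 2m/2\rfloor$ to $m\ge2$. If one wanted a proof independent of Theorem~\ref{the11}, the natural alternative would be induction on $m$, handling $G=C_{2m}$ and the small cases $m=2,3$ directly and reducing a general $G$ by deleting a pendant edge that lies in every perfect matching together with its support vertex, controlling the resulting change in the out-degree and in-degree contributions; but this essentially reproduces the analysis already carried out for Theorem~\ref{the11}, so presenting Theorem~\ref{the8} as its corollary is the cleaner option.
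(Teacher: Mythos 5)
Your main argument is circular given how the paper actually establishes Theorem~\ref{the11}. The paper proves Theorem~\ref{the8} first, by a self-contained induction on $m$ (Section~3), and then proves Theorem~\ref{the11} by induction on $n$ with the base case $n=2m$ disposed of precisely by invoking Theorem~\ref{the8}. So deriving Theorem~\ref{the8} as the specialization $n=2m$ of Theorem~\ref{the11} assumes the very statement being proved: the instance of Theorem~\ref{the11} you need is exactly the instance whose proof \emph{is} Theorem~\ref{the8}. Your algebra is fine --- substituting $n=2m$ does collapse $\frac{1}{2}[n^{2}+(-2m+3)n+m^{2}+m-2]$ to $\frac{1}{2}[m^{2}+7m-2]$, and your reading of $U^{*}_{2m,m}$ (including the extra sink-source orientations for $(n,m)=(4,2)$ and $(6,3)$, and the typo $G$ for $D$ in the equality condition) is correct --- but none of that repairs the dependency problem.

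Your closing remark gestures at the right fix: an independent induction on $m$. That is indeed what the paper does, but the sketch you give omits all of the actual content. The paper's proof needs: the extremal result for trees $T\in T(2m,m)$ and their orientations (Lemmas~\ref{lem3} and~\ref{lem4}), exhaustive computations for the two small graphs $U_{4,2}$ and the triangle with three pendants (Lemmas~\ref{lem5} and~\ref{lem7}), and a two-case reduction --- Case~1 deletes a pendant vertex $u$ together with its degree-two neighbour $v$ and carefully bounds the change in $M_1$ of the orientation via the out-/in-degree splitting at the vertex $w$ beyond $v$; Case~2 (no such pendant edge) reduces to the tree case or to the small exceptional graphs, which is where the extra extremal orientations $U^{(5)}_{6,3},U^{(6)}_{6,3}$ arise. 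Without carrying out that case analysis, the ``alternative'' route remains a plan rather than a proof, so as written the proposal does not establish the theorem.
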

\indent Hence, we solve the problem on the maximum values of the first Zagreb  index for orientations of unicyclic graphs with $n$ vertices and
matching number $m$ ($2\leq m\leq \lfloor\frac{n}{2}\rfloor$).

\section{Some useful lemmas }

 In this section, we give three useful lemmas.

\begin{lemma}\cite {JM19}\label{lem1}
 Let $G$ be a graph. Then $G$ is a bipartite graph if and only if $G$ has a sink-source orientation. Moreover, If $G$ is a connected bipartite graph, then there exist exactly two sink- source orientations of $G$.
\end{lemma}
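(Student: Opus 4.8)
\medskip
\noindent\textbf{Proof proposal.} My plan is to establish the equivalence and the counting statement separately, in both cases via the correspondence between sink--source orientations of $G$ and proper $2$-colourings of $G$.

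For the equivalence I would first handle the easy direction: if $G$ is bipartite with parts $X$ and $Y$, then orienting every edge from its endpoint in $X$ to its endpoint in $Y$ turns each vertex of $X$ into a source and each vertex of $Y$ into a sink, so $G$ has a sink--source orientation. For the converse, let $D\in\mathcal{O}(G)$ be a sink--source orientation; the key observation is local: for each arc $uv\in A(D)$ we have $d^{+}_{u}\geq 1$, so $u$ cannot be a sink and must be a source, and $d^{-}_{v}\geq 1$, so $v$ cannot be a source and must be a sink. Hence every edge of $G$ joins a source to a sink, so the partition of the non-isolated vertices into sources and sinks (any isolated vertices assigned arbitrarily) is a bipartition of $G$, and $G$ is bipartite. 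Equivalently, one notes that no vertex can simultaneously be the tail of one arc and the head of another, so the orientations alternate along any cycle, forcing even length.

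For the ``moreover'' part I would make the correspondence explicit. Let $G$ be connected, bipartite, and with at least one edge. The converse argument above shows that every sink--source orientation $D$ of $G$ gives a partition of $V(G)$ into sources and sinks, which is a proper $2$-colouring, and that $D$ is recovered from this colouring by orienting each edge from its source-end to its sink-end. Conversely, any proper $2$-colouring of $G$ with classes $S'$ and $T'$ yields, by orienting each edge from $S'$ to $T'$, an orientation in which every vertex is a sink or a source. So sink--source orientations of $G$ are in bijection with proper $2$-colourings of $G$. Since a connected bipartite graph has a bipartition that is unique up to interchanging the two classes, it has exactly two proper $2$-colourings; hence $G$ has exactly two sink--source orientations.

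The only ingredient that is not a one-line local check is the uniqueness up to swap of the bipartition of a connected bipartite graph, which is standard (fix a root $r$; in any proper $2$-colouring the colour of a vertex is forced by the parity of its distance to $r$ together with the colour of $r$), so I do not expect a genuine obstacle. The one point to be careful about is the degenerate edgeless case --- e.g. a single vertex, whose unique orientation is trivially sink--source --- for which ``exactly two'' should be read with $G$ assumed to have an edge; this is irrelevant to the uses of the lemma in the paper, where the graphs in question always contain a cycle.
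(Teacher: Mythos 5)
Your proof is correct. Note that the paper does not prove this lemma at all --- it is quoted from the cited reference of Monsalve and Rada --- so there is no in-paper argument to compare against; your argument (sink--source orientations correspond to proper $2$-colourings, plus uniqueness of the bipartition of a connected bipartite graph up to swapping the classes) is the standard one and fills that gap correctly, including the sensible caveat about the edgeless case.
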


Now, we can show a important result.

\begin{lemma}\label{lem2}
Let $G$ be a graph, $D \in \mathcal{O}(G)$. Then
$$M_1(D)\leq \frac{M_1(G)}{2}$$ equality occurs if and only if $D$ is a sink-source orientation of $G$.

\end{lemma}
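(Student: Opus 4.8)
The plan is to reduce both sides to sums of squares of (semi-)degrees over the vertex set and then apply an elementary inequality vertex by vertex. First I would rewrite the digraph index by switching the order of summation: since $d^{+}_u$ is the number of arcs whose tail is $u$, the term $d^{+}_u$ occurs exactly $d^{+}_u$ times in $\sum_{uv\in A}(d^{+}_u+d^{-}_v)$, and similarly $d^{-}_v$ occurs exactly $d^{-}_v$ times; grouping the contributions by tail and by head respectively gives
$$M_1(D)=\frac12\sum_{uv\in A}(d^{+}_u+d^{-}_v)=\frac12\left[\sum_{u\in V}(d^{+}_u)^2+\sum_{u\in V}(d^{-}_u)^2\right].$$
In the same way the classical identity $M_1(G)=\sum_{v\in V}d_G(v)^2$ follows by counting, for each vertex $v$, how many edges contribute the term $d_G(v)$.

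Next I would use the fact that $D$ is an orientation of $G$, so every edge of $G$ becomes exactly one arc and hence $d^{+}_u+d^{-}_u=d_G(u)$ for every vertex $u$. For nonnegative integers $a,b$ one has $a^2+b^2\le (a+b)^2$, with equality if and only if $a=0$ or $b=0$. Applying this with $a=d^{+}_u$ and $b=d^{-}_u$ gives $(d^{+}_u)^2+(d^{-}_u)^2\le d_G(u)^2$ for each $u$; summing over all vertices and dividing by $2$ yields $M_1(D)\le \tfrac12 M_1(G)$, which is the claimed bound.

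Finally, for the equality characterization I would note that equality in the summed inequality forces equality in each vertex term, i.e. $d^{+}_u=0$ or $d^{-}_u=0$ for every $u\in V$, which is precisely the definition of a sink-source orientation of $G$; conversely, any sink-source orientation attains the bound. The argument is essentially routine, and the only step requiring a little care is the double-counting identity for $M_1(D)$: one must tally tails and heads correctly so that the mixed terms disappear and one is left with the clean sum of squared out- and in-degrees.
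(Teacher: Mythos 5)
Your proposal is correct, but it takes a different route from the paper's own proof of this lemma. The paper argues arc by arc: since $d^{+}_u\le d_u$ and $d^{-}_v\le d_v$, each summand satisfies $d^{+}_u+d^{-}_v\le d_u+d_v$, and summing over all arcs (which are in bijection with the edges of $G$) gives the bound; equality then forces $d^{+}_u=d_u$ for the tail and $d^{-}_v=d_v$ for the head of every arc, which yields the sink-source condition. You instead first convert $M_1(D)$ into the vertex sum $\frac12\sum_u[(d^{+}_u)^2+(d^{-}_u)^2]$ --- this double-counting identity is exactly the paper's Lemma \ref{lem6}, stated and proved separately there --- and then apply the pointwise inequality $(d^{+}_u)^2+(d^{-}_u)^2\le(d^{+}_u+d^{-}_u)^2=d_G(u)^2$, using that an orientation satisfies $d^{+}_u+d^{-}_u=d_G(u)$. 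Your version localizes the equality analysis at each vertex, where ``equality iff $d^{+}_ud^{-}_u=0$'' is immediate and matches the definition of a sink-source orientation verbatim; the paper's per-arc equality condition requires the extra (if easy) observation that ``$d^{+}_u=d_u$ for every tail and $d^{-}_v=d_v$ for every head'' implies every vertex is a source or a sink. The trade-off is that your argument leans on the reindexing identity, while the paper's is a one-line termwise comparison that needs no rearrangement. Both are complete and correct.
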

\begin{proof}
Let $G = (V,E)$ and $D=(V,A)$. For each $u \in V$,
$d_u=d_u^{+}+d_u^{-}$.
So $d_u\geq d_u^{+}$ and $d_v \geq d_v^{-}$, where $u,v \in V$. Then $d_u^{+}+d_v^{-}\leq  d_{u}+d_v$. Hence
$$
M_1(D)=\frac{1}{2}\sum_{uv\in A}(d_u^{+}+d_v^{-})\leq \frac{1}{2}\sum_{uv\in E(G)}(d_u+d_v)=\frac{M_1(G)}{2}.
$$
\indent If $D$ is a sink-source orientation of $G$, then for each $u \in V$, one has either $d_u^{+}=0$ or $d_u^{-}=0$. Moreover, if $uv \in A$, then
$d_u^{+}\neq 0$ and  $d_u^{-}=0$, so $d_u=d_u^{+}$. It is Similar to $d_v$. Hence
$$
M_1(D)=\frac{1}{2}\sum_{uv\in A}(d_u^{+}+d_v^{-})= \frac{1}{2}\sum_{uv\in E(G)}(d_u+d_v)=\frac{M_1(G)}{2}.
$$
\indent Conversely,  $d_u \geq d_u^{+}$ and $d_v \geq d_v^{-}$, then $d_u+d_v \geq d_u^{+}+d_v^{-}$ with equality  if and only if $d_u =d_u^{+}$ and $d_v=d_v^{-}$, so $M_1(D)=\frac{M_1(G)}{2}$ if and only if $d_u=d_u^{+}$
and $d_v=d_v^{-}
$, where  all $uv \in A$. This clearly implies that either  $d_w^{+}=0$ or $d_w^{-}=0$ for any $w\in V$.\\
\end{proof}
\begin{lemma}\label{lem6}%引理6
  Let $G$ be the graph with $n$ non-islated vertices and $D\in \mathcal{O}(G)$. Then $$M_{1}(D)=\frac{1}{2} \sum_{u\in V(D)}\left[(d^{+}_u)^{2}+(d^{-}_u)^{2}\right]$$

\end{lemma}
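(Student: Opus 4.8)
The plan is to compute both sides of the claimed identity directly from the definition $M_1(D)=\frac{1}{2}\sum_{uv\in A}(d^{+}_u+d^{-}_v)$ and reorganize the sum by vertices rather than by arcs. Specifically, I would split the arc-sum into two pieces, $\frac{1}{2}\sum_{uv\in A}d^{+}_u$ and $\frac{1}{2}\sum_{uv\in A}d^{-}_v$, and handle each by grouping the arcs according to a common endpoint.

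For the first piece, I would fix a vertex $u$ and collect all arcs having $u$ as their tail. There are exactly $d^{+}_u$ such arcs, and each contributes $d^{+}_u$ to the sum, so the total contribution of $u$ is $(d^{+}_u)^2$. Summing over all $u\in V(D)$ gives $\sum_{uv\in A}d^{+}_u=\sum_{u\in V(D)}(d^{+}_u)^2$. Symmetrically, for the second piece I would fix a vertex $v$ and collect all arcs having $v$ as their head; there are $d^{-}_v$ of them, each contributing $d^{-}_v$, so $v$ contributes $(d^{-}_v)^2$, and summing over all $v$ gives $\sum_{uv\in A}d^{-}_v=\sum_{v\in V(D)}(d^{-}_v)^2$. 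Adding the two rearranged sums and pulling out the factor $\frac{1}{2}$ yields
$$M_1(D)=\frac{1}{2}\left[\sum_{u\in V(D)}(d^{+}_u)^2+\sum_{v\in V(D)}(d^{-}_v)^2\right]=\frac{1}{2}\sum_{u\in V(D)}\left[(d^{+}_u)^2+(d^{-}_u)^2\right],$$
which is exactly the asserted formula.

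This lemma is essentially a bookkeeping identity — the analogue for graphs of rewriting $\sum_{uv\in E}(d_u+d_v)$ as $\sum_{v}d_v^2$ — so there is no real obstacle. The only point requiring a little care is the justification of the interchange of summation order, i.e. that summing arc-contributions grouped by tail (or by head) recovers the original sum over all arcs; since every arc has a unique tail and a unique head and the vertex set is finite, this is immediate. I would also note that isolated vertices contribute $0$ to the right-hand side, so whether the sum ranges over all of $V(D)$ or only the non-isolated vertices is immaterial.
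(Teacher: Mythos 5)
Your argument is correct and is essentially identical to the paper's own proof: both rewrite the arc-sum by observing that $d^{+}_u$ occurs as a summand exactly $d^{+}_u$ times (once per arc with tail $u$) and $d^{-}_u$ exactly $d^{-}_u$ times (once per arc with head $u$), yielding $(d^{+}_u)^2+(d^{-}_u)^2$ per vertex. Your version just spells out the regrouping slightly more explicitly.
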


\begin{proof}
 As the fact that $M_{1}(D)=\frac{1}{2} \sum \limits_{uv\in A}\left[(d^{+}_u)+(d^{-}_v)\right]$ and $d^{+}_u$  (resp. $d^{-}_u$) occur $d^{+}_u$ (resp. $d^{-}_u$) times in the sum, for each $u\in V(D)$. \\
 \indent So, $M_{1}(D)=\frac{1}{2} \sum_{u\in V(D)}\left[(d^{+}_u)^{2}+(d^{-}_u)^{2}\right]$.
  % \indent Hence
  %\begin{align*}
   % &\frac{1}{2} \left[\sum_{u_{j}v_i\in A(D)}(d^{+}_{u_j}+d^{-} _{v_i})+\sum_{v_{i}u_{j}\in A(D)}(d^{-}_{u_j}+d^{+}_{v_i})\right]\\
   %&=\frac{1}{2} \left[(d^{+}_{u_j})^2+(d_{u_j}-d^{+}_{u_j})^2+\sum^{d^{+}_{u_j}}_{i=1}(d^{-} _{v_i})+\sum^{d_{u_j}}_{i=d^{+}_{u_j}+1}(d^{+}_{v_i})\right]\\
    %&=\frac{1}{2} \left[(d^{+}_{u_j})^2+(d^{-}_{u_j})^2+\sum^{d^{+}_{u_j}}_{i=1}(d^{-} _{v_i})+\sum^{d_{u_j}}_{i=d^{+}_{u_j}+1}(d^{+}_{v_i})\right]
    %\end{align*}
    % \\\indent This clearly implies that $a_j=d^{+}_{u_j}$,$b_j=d^{-}_{u_j}$,$i=1,2,\cdot\cdot\cdot,n$.\\
      %\indent Consequently  $M_{1}(D)=\frac{1}{2} \sum\limits_{i=1}^{n}\left[(d^{+}_{u_i})^2+(d^{-} _{u_i})^2\right]=\frac{1}{2} \sum\limits_{u\in V(D)}\left[(d^{+}_u)^{2}+(d^{-}_u)^{2}\right]$.
\end{proof}

\section{Proof of Theorem 2}

 In this section, we first give a proof of Theorem 2, then we will prove Theorem 1 in next section by using Theorem 2.\\
 \indent We first determine the maximum  values of the first Zagreb  index for orientations of trees with $2m$ vertices and
matching number $m$ ($m\geq 1$). \\
\indent Let $n$ and $m$ be integers and $1\leq m\leq \lfloor\frac{n}{2} \rfloor$. $T(n,m)$ denotes  the class of trees on $n$ vertices with matching number $m$. We denote by $T_{n,m}$  a tree formed by attaching a pendent vertex to each of $m-1$ pendent vertices of the graph $K_{1,n-m}$, where a pendent vertex is a vertex of degree one (see Figure $\ref{fig-1}$).  Obviously, $T_{n,1} =K_{1,n-1}$ and $T_{n,m}\in T(n,m)$. Let $T$ be a tree with $u,v \in V(T)$. We  denote by $P_{T}(u,v)$  the unique path from $u$ to $v$ in $T$. Firstly, we give a  lemma which is related to $P_{T}(u,v)$.
\begin{figure}[ht]
\begin{center}
  \includegraphics[width=5cm,height=2cm]{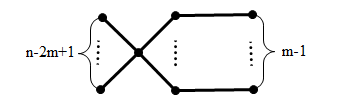}
     \end{center}
\vskip -0.5cm
\caption{ The graph $T_{n,m}$ .}\label{fig-1}
\end{figure}
\begin{lemma}\cite{WL10}\label{lem31}
Let $T$ be a tree with at least four vertices and a perfect matching $M$. If $P_{T}(u,v)$ as a diametrical path in $T$, then the unique neighbor of $u$ has degree two.
\end{lemma}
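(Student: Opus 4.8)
The plan is to argue by contradiction, combining the maximality of the diametrical path with the fact that a perfect matching saturates every vertex. First I would note that, being an endpoint of a longest path in the tree $T$, the vertex $u$ is necessarily a leaf; let $w$ denote its unique neighbour and write the diametrical path as $P_T(u,v) = u\,w\,v_2\,v_3\cdots v_k$ with $v_k = v$. Since $T$ has at least four vertices and admits a perfect matching, $T$ cannot be a star $K_{1,n-1}$ (which has a perfect matching only when $n=2$), so $T$ has diameter at least $3$; hence $k \geq 3$, the vertex $v_2$ exists with $v_2 \neq v$, and consequently $d_T(w) \geq 2$.

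Next I would establish that every neighbour of $w$ other than $v_2$ is a leaf. Suppose on the contrary that $y$ is a neighbour of $w$ with $y \neq v_2$ and $d_T(y) \geq 2$; then $y$ has a neighbour $z \neq w$, and since $T$ is a tree the sequence $z\,y\,w\,v_2\cdots v_k$ is a path of length $k+1$, contradicting that $P_T(u,v)$ has maximum length. Thus all neighbours of $w$ except possibly $v_2$ are leaves, and $u$ is one such leaf.

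Finally I would invoke the perfect matching $M$. As $u$ is a leaf, its only incident edge is $uw$, and saturation of $u$ forces $uw \in M$, so $w$ is matched to $u$. If $d_T(w) \geq 3$, then besides $u$ and $v_2$ the vertex $w$ would have a further neighbour $y$, which by the previous paragraph is a leaf; its only edge $yw$ cannot belong to $M$ since $w$ is already covered, so $y$ would be unsaturated, contradicting that $M$ is perfect. Hence $d_T(w) \leq 2$, and together with $d_T(w) \geq 2$ this gives $d_T(w) = 2$. The only delicate point in this argument is ruling out the degenerate situation where the diametrical path is too short for $v_2$ to exist; that is precisely what the hypothesis of at least four vertices together with the existence of a perfect matching takes care of.
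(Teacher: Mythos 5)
Your proof is correct: the three steps (a diametrical endpoint is a leaf and, since $T$ is not a star, its neighbour $w$ lies on the path with a successor $v_2$; any neighbour of $w$ other than $v_2$ must be a leaf by maximality of the path; and the forced edge $uw\in M$ leaves any third neighbour of $w$ unsaturated) are each justified and fit together without gaps. The paper itself gives no proof of this lemma, citing it from \cite{WL10}, so there is nothing to compare against; your argument is the standard one and can stand as a self-contained proof.
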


We first consider trees with a perfect matching.

\begin{lemma}\label{lem3}
Let $T\in T(2m,m)$ with $m\geq 1$. Then
$$M_1(T)\leq m^2+5m-4$$
with equality if and only $T \cong T_{2m,m}$.
\end{lemma}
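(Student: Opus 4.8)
\textbf{Proof proposal for Lemma \ref{lem3}.}

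The plan is to work with the usual formula $M_1(T)=\sum_{v\in V(T)}d_v^2$ (the graph analogue of Lemma \ref{lem6}) and induct on $m$. The base cases $m=1$ ($T\cong K_2$, $M_1=2=1+5-4$) and $m=2$ (the only trees in $T(4,2)$ are $P_4$ and... actually only $P_4$ has matching number $2$ among trees on $4$ vertices, and $T_{4,2}=P_4$, $M_1(P_4)=1+4+4+1=10=4+10-4$) are checked directly. For the inductive step, take $T\in T(2m,m)$ with $m\geq 3$ and a perfect matching $M$. Pick a diametrical path $P_T(u,v)$; by Lemma \ref{lem31} the neighbor $w$ of the leaf $u$ has degree $2$, and $uw\in M$. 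Let $x$ be the other neighbor of $w$, and form $T'=T-\{u,w\}$. Then $T'$ is a tree on $2(m-1)$ vertices with a perfect matching (namely $M\setminus\{uw\}$), so $T'\in T(2m-1\text{-vertex issue})$ — more precisely $T'\in T(2(m-1),m-1)$, and by induction $M_1(T')\leq (m-1)^2+5(m-1)-4$.

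The key computation is to bound $M_1(T)-M_1(T')$. Removing $u$ and $w$ deletes the contributions $d_u^2=1$ and $d_w^2=4$, lowers $x$'s degree by $1$ (from $d_x$ to $d_x-1$), and leaves every other vertex's degree unchanged; hence
$$M_1(T)-M_1(T')=1+4+d_x^2-(d_x-1)^2=5+2d_x-1=4+2d_x.$$
Since $x$ has degree at most $n-m=m$ in $T$ (a standard fact: a tree on $2m$ vertices with a perfect matching has maximum degree at most $m$, proved e.g. by noting the $m$ edges of $M$ can meet $x$ in at most one edge while the other $d_x-1$ incident edges go to distinct $M$-saturated neighbors, forcing $d_x-1\le m-1$), we get $M_1(T)-M_1(T')\le 4+2m$, and therefore
$$M_1(T)\le (m-1)^2+5(m-1)-4+2m+4=m^2+5m-4,$$
as desired. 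The equality analysis requires $M_1(T')=(m-1)^2+5(m-1)-4$, so $T'\cong T_{2(m-1),m-1}$ by induction, \emph{and} $d_x=m$, i.e. $x$ is the center of the star in $T'$ and all its non-$M$ neighbors in $T'$ are leaves; reattaching the path $xwu$ to this $x$ reconstructs exactly $T_{2m,m}$ (and one must check no other attachment point $x$ in $T_{2(m-1),m-1}$ has degree $m$, which holds since the center has degree $m-1$ in $T'$, reaching $m$ only when... here one must be careful and instead argue that the center of $T_{2(m-1),m-1}$ has degree $m$ exactly when $x$ is that center).

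The main obstacle I anticipate is the equality case bookkeeping: I must be sure that the diametrical-path reduction, when run in reverse, cannot build an extremal tree other than $T_{2m,m}$, and in particular that the degree bound $d_x\le m$ is attained only at the vertex that plays the role of the star center. A cleaner route may be to avoid induction on the reduced tree's \emph{structure} and instead argue directly: in any $T\in T(2m,m)$, since $T$ has a perfect matching, every non-leaf can be matched, and a counting argument (each of the $n-m=m$ "free" edges not in a fixed perfect matching, together with leaf count) shows $\sum d_v^2$ is maximized by concentrating degree at a single vertex subject to the matching constraint, which forces the $T_{2m,m}$ shape; but carrying this out rigorously likely still reduces to the same induction, so I would present the inductive proof above and spend the bulk of the writing on the equality discussion.
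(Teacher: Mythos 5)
Your proof is correct and follows essentially the same route as the paper: induction on $m$, a diametrical path, Lemma \ref{lem31}, deletion of the leaf together with its degree-two neighbour, and a degree bound on the third vertex along the path (your $d_x\le m$ is exactly the paper's $s\le m-1$). Your use of $M_1(T)=\sum_{v}d_v^2$ makes the increment exact ($4+2d_x$) and your matching-counting proof of $d_x\le m$ is cleaner than the paper's two-case degree-sum estimate, but the substance is identical; just finish the equality bookkeeping you flagged (for $m\ge 4$ the centre is the unique vertex of degree $m-1$ in $T_{2(m-1),m-1}$, and for $m=3$ both degree-two vertices of $P_4$ reattach to give $T_{6,3}$).
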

\begin{proof}
We will prove by induction on $m$. \\
\indent Obviously, $T=T_{2,1}$ for $m=1$, and $T = T_{4,2}$ for $m =2$. So the result holds for $m=1,2$.\\
\indent If $m\geq 3$.  Suppose that the result holds for trees in $T(2(m-1),(m-1))$. Let $T\in T(2m,m)$ and $M$  a perfect matching of $T$. Note that the diameter of $T$ is at least four. We can denote by  $P_{T}(u,v)=uxyz\cdot\cdot\cdot$  a diametrical path in $T$. Then $z\neq v$. Let $N_{T}(y)=\{x_1,x_2,\cdot\cdot\cdot,x_{s+1}\}$ with $x_1 = x$ and $x_{s+1}=z$.\\
\indent Suppose that $yz\in M$. By Lemma 6,  $d_{x_i}= 2$ and $d_{u_i}= 1$, where $u_i$ is the neighbor of $x_i$ different from $y$ for $1\leq i\leq s$, and $u_1=u$. So $2(2m-1)\geq \sum\limits_{i=1}^{s}(d_{x_i}+d_{u_i})+d_y+d_z+d_v\geq 3s+(s+1)+2+1>4s+2$, hence $s<m-1$. Suppose that $yz\notin M$. Then $M$ contains $zw$ for some neighbor $w$ of $z$ different from $y$, and $M$ contains one of $yx_i$ for $2 \leq i\leq s$, say $yx_s$. Since $P_{T}(u,v)$ is a diametrical path, $x_s$ is a pendent vertex. By Lemma \ref{lem31}, $d_{x_j}=2$ and $d_{u_j}= 1$, where $u_j$ is the neighbor of $x_j$ different from $y$ for $1\leq j \leq s-1$, and $u_1=u$. So $2(2m-1)\geq \sum\limits_{j=1}^{s-1}(d_{x_j}+d_{u_j})+d_{x_s}+d_z+d_y+d_w\geq 3(s-1)+1+(s+1)+2+1=4s+2$, hence $s \leq m-1.$ Consequently, $s \leq m-1$.\\
\indent Let $T'=T-\{u,x\}\in T(2(m-1),m-1)$ and  it is easily checked that $M-\{ux\}$ is  a perfect matching of $T'$. \\
\indent By the induction hypothesis, it is obvious that $M_1(T')\leq (m-1)^2+5(m-1)-4$. Hence
\begin{equation*}
\begin{aligned}
M_1(T) \leqslant& M_1(T')+d_x+d_u+d_y+d_{x}+\sum_{i=2}^{s}\left[\left(d_{y}+d_{ x_i}\right)-\left(d_{y}-1+d_{x_i}\right)\right]\\
&+[(d_y+d_z)-(d_{y}-1+d_z)]
\end{aligned}
\end{equation*}
\begin{equation*}
\begin{aligned}
& \leqslant M_1(T')+3+(s+3)+s \\
& \leqslant (m-1)^2+5(m-1)-4+6+2(m-1)\\
&=m^2+5m-4, 
\end{aligned}
\end{equation*}
equality  occurs if and only if $M_1(T')=(m-1)^2+5(m-1)-4$ and $s=m-1$  or equivalently, $T-\{u,x\}\cong T_{2(m-1),m-1}$, $yz\notin M$ and $d_y=m$, i.e. $T\cong T_{2m,m}$.
\end{proof}

We can extend the result for the first Zagreb index of trees to the oriented trees.
\begin{lemma}\label{lem4}
Let $T\in T(2m,m)$ with $m\geq 1$, $D\in \mathcal{O}(T)$. Then
$$M_1(D)\leq \frac{1}{2}(m^2+5m-4)$$
with equality if and only $D$ is a sink-source orientation of $T_{2m,m}$.
\end{lemma}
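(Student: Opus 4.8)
The plan is to combine Lemma~\ref{lem2} with Lemma~\ref{lem3}, together with the characterization of sink-source orientations from Lemma~\ref{lem1}. First I would invoke Lemma~\ref{lem2}, which gives $M_1(D)\le \frac{M_1(T)}{2}$ for any $D\in\mathcal{O}(T)$. Then I would apply Lemma~\ref{lem3} to bound $M_1(T)\le m^2+5m-4$ for any $T\in T(2m,m)$, so that $M_1(D)\le \frac{1}{2}(m^2+5m-4)$, which is the claimed inequality.

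For the equality characterization, I would chase the two equality conditions separately. Equality in Lemma~\ref{lem2} forces $D$ to be a sink-source orientation of $T$; in particular, by Lemma~\ref{lem1} such a $D$ exists only when $T$ is bipartite, which every tree is, so this is no obstruction. Equality in Lemma~\ref{lem3} forces $T\cong T_{2m,m}$. Hence equality in the lemma holds if and only if $D$ is a sink-source orientation of $T_{2m,m}$. Conversely, if $D$ is a sink-source orientation of $T_{2m,m}$, then Lemma~\ref{lem2} gives $M_1(D)=\frac{M_1(T_{2m,m})}{2}$ and Lemma~\ref{lem3} (equality case) gives $M_1(T_{2m,m})=m^2+5m-4$, so $M_1(D)=\frac{1}{2}(m^2+5m-4)$; this confirms the ``if'' direction and also that the bound is attained.

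The only point that needs a word of care is the small case $m=1$: then $T_{2,1}=K_2$ has two orientations, both of which are trivially sink-source orientations, and the formula gives $M_1(D)\le 1$, consistent with $M_1(D)=\frac12(d_u^{+}+d_v^{-})=\frac12$ — so one should note the lemma's bound is not tight to this half-integer but the stated equality case is still correct in form. I do not expect a genuine obstacle here: this lemma is essentially a corollary of the previously established results, and the argument is a short two-step deduction plus an equality bookkeeping. The substantive work has already been done in Lemma~\ref{lem3}; this lemma merely transfers it to oriented trees via the general inequality $M_1(D)\le M_1(G)/2$.
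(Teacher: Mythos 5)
Your proposal is correct and follows exactly the paper's route: apply Lemma~\ref{lem2} to get $M_1(D)\le \frac{1}{2}M_1(T)$, then Lemma~\ref{lem3} to bound $M_1(T)$, and chase the two equality conditions. One small correction to your aside on $m=1$: for an orientation of $K_2$ one has $M_1(D)=\frac{1}{2}(d_u^{+}+d_v^{-})=\frac{1}{2}(1+1)=1=\frac{1}{2}(1^2+5\cdot 1-4)$, so the bound is in fact tight there and no caveat is needed.
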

\begin{proof}
Let $D\in \mathcal{O}(T)$, where $T\in T(2m,m)$.
    Since $T$ is a bipartite graph, $T$ has sink-source orientation, by Lemma $\ref{lem1}$.\\
     \indent From Lemma $\ref{lem2}$, $M_{1}(D)\leq \frac{1}{2}M_{1}(T)$, equality occurs if ond only if $D$ is a sink-source orientation of $T$.\\
     \indent Hence, by Lemma $\ref{lem3}$, $$\max \{M_{1}(D)|\\D\in \mathcal{O}(T),T\in T(2m,m)\}=\max \{\frac{1}{2}M_{1}(T)|T\in T(2m,m)\}=\frac{1}{2}M_{1}(T_{2m,m})$$
    \indent Consequently,  $M_1(D)\leq \frac{1}{2}(m^2+5m-4)$,
    equality occurs if and only if $D$ is a sink-source orientation of $T_{2m,m}$.

\end{proof}

We give the maximum  values of the first Zagreb  index for orientations of two graph, which will be used in the following.

\begin{lemma}\label{lem5}%引理5
Let $D\in \mathcal{O}(U_{4,2})$. Then $$M_{1}(D)\leq 8$$
with equality if and only if  $D\in\{U^{(1)}_{4,2},U^{(2)}_{4,2},U^{(3)}_{4,2},U^{(4)}_{4,2}\}$.
\end{lemma}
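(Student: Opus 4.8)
The plan is to compute $M_1(D)$ directly for every orientation of the small fixed graph $U_{4,2}$. Recall that $U_{4,2}$ is the graph formed by attaching $n-2m+1 = 1$ pendant vertex and $m-2 = 0$ paths of length $2$ to a vertex of a triangle; hence $U_{4,2}$ is simply a triangle $xyz$ with one extra pendant vertex $w$ attached to, say, $x$. It has $4$ vertices and $4$ edges, so there are $2^4 = 16$ orientations in total (fewer up to isomorphism). First I would invoke Lemma~\ref{lem6}, so that for any $D \in \mathcal{O}(U_{4,2})$ we have $M_1(D) = \tfrac12 \sum_{u} \big[(d_u^+)^2 + (d_u^-)^2\big]$, which reduces the problem to a degree-sequence bookkeeping exercise: the underlying degrees are $d_w = 1$, $d_x = 3$, $d_y = d_z = 2$, and each orientation splits these into in- and out-parts.

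The key observation is that for a vertex of degree $d$, the quantity $(d^+)^2 + (d^-)^2$ with $d^+ + d^- = d$ is maximized exactly when $\{d^+, d^-\} = \{d, 0\}$ (giving $d^2$) and is smaller otherwise. So termwise the contributions are at most: $w$ contributes $1$ always (degree $1$); $y$ and $z$ each contribute at most $4$, with $4$ attained iff they are a local sink or source and $2$ attained when they are "transit" vertices; $x$ contributes at most $9$, attained iff $x$ is a sink or source, else at most $5$ (from the split $\{2,1\}$). Thus the crude bound is $\tfrac12(1 + 9 + 4 + 4) = 9$. I would then argue that the value $9$ is \emph{not} attainable: if $x$ is a sink or source then the edge $wx$ forces $w$'s orientation, but more importantly, making $x$ a source (all three edges out) forces both $y$ and $z$ to receive an arc from $x$, so neither $y$ nor $z$ can be a source; for each of $y,z$ to reach its maximum $4$ it would then have to be a \emph{sink}, but $y$ and $z$ are adjacent (the edge $yz$ of the triangle), so they cannot both be sinks. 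The symmetric argument rules out $x$ being a sink. Hence at most one of the three "degree $\geq 2$" vertices can hit its individual maximum when $x$ does, and a short case check shows $\tfrac12 \sum \leq 8$, i.e. $M_1(D) \leq 8$.

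For the equality characterization I would exhibit the orientations achieving $8$ and match them against $U^{(1)}_{4,2}, \dots, U^{(4)}_{4,2}$ of Figure~\ref{fig-3}. The value $8$ requires $\sum_u [(d_u^+)^2 + (d_u^-)^2] = 16$; with $w$ contributing $1$ and $x$ contributing an odd number ($5$ or $9$), parity forces $x$'s contribution to be $5$ and $y,z$ to contribute $4+4+1 \cdots$ — more precisely the only way to write $16 = 1 + c_x + c_y + c_z$ with $c_x \in \{5,9\}$, $c_y,c_z \in \{2,4\}$ is $16 = 1 + 9 + 4 + 2$ or $16 = 1 + 5 + 4 + 4$ (rejecting $1+9+2+4$ as the same, and $1+9+4+4=18$ too big, $1+5+2+4$, etc. do not sum to $16$ except $1+5+4+4+2$...). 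Let me be careful: the admissible multiset equations are $1 + 9 + 4 + 2 = 16$ and $1 + 5 + 4 + 4 = 14 \neq 16$, hmm so actually $1+5+4+4 = 14$, $1+5+4+2=12$, $1+9+4+2=16$, $1+9+2+2=14$, $1+9+4+4=18$. So in fact equality forces $c_x = 9$ (x a sink or source), $c_y + c_z = 6$, i.e. exactly one of $y,z$ is a local sink/source and the other is a transit vertex. I would then enumerate: $x$ source and ($y$ sink, $z$ transit) or ($z$ sink, $y$ transit); $x$ sink and ($y$ source, $z$ transit) or ($z$ source, $y$ transit); in each case the orientation of every arc is determined, giving exactly four orientations, which are (up to the obvious labeling) $U^{(1)}_{4,2}, U^{(2)}_{4,2}, U^{(3)}_{4,2}, U^{(4)}_{4,2}$.

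The only mild obstacle is the bookkeeping in the equality part: one must verify that the four sign patterns just described really do correspond to the four pictured orientations and that no two of them coincide or are isomorphic in a way that would collapse the count; this is routine given the figure but should be spelled out. There is no deep difficulty here — the lemma is a finite check dressed up via Lemma~\ref{lem6}, and its role is purely as a base case for the induction in the proof of Theorem~\ref{the8}.
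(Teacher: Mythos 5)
Your proof is correct, but it takes a genuinely different route from the paper's. The paper's own proof of this lemma is pure enumeration: it lists all $2^4=16$ orientations of $U_{4,2}$ in a figure, asserts the value of $M_1$ for each (four orientations each with value $5$, $6$, $7$, $8$), and reads off the maximum and the maximizers, with the actual computations left implicit. You instead route the argument through Lemma~\ref{lem6}, writing $M_1(D)=\tfrac12\sum_u[(d_u^+)^2+(d_u^-)^2]$, bound each vertex's contribution by convexity of the split $d_u=d_u^++d_u^-$ (so $w$ gives $1$, each of $y,z$ gives $4$ or $2$, and $x$ gives $9$ or $5$), and then rule out the naive total of $18$ by the observation that $y$ and $z$ are adjacent, hence cannot both be local sinks once $x$ is a source (and symmetrically). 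Your residual case analysis correctly shows $1+9+4+2=16$ is the only attainable total equal to $16$, forcing $x$ to be a sink or source and exactly one of $y,z$ to be the opposite, which pins down exactly four orientations. This is in fact the same technique the paper deploys later for Lemma~\ref{lem7} (the $64$-orientation graph), so your approach is more uniform and more checkable than the paper's bare table; what the paper's enumeration buys is only that it sidesteps the degree-split bookkeeping. The one thing to tidy is the equality discussion: the self-correcting arithmetic should be replaced by a clean list of the admissible sums ($1+9+4+4=18$ excluded by adjacency, $1+9+4+2=16$ attainable, all remaining combinations at most $14$), and the matching of the four resulting sign patterns with $U^{(1)}_{4,2},\dots,U^{(4)}_{4,2}$ of Figure~\ref{fig-3} should be stated explicitly, as you note.
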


\begin{figure}[ht]
\begin{center}
  \includegraphics[width=13cm,height=4cm]{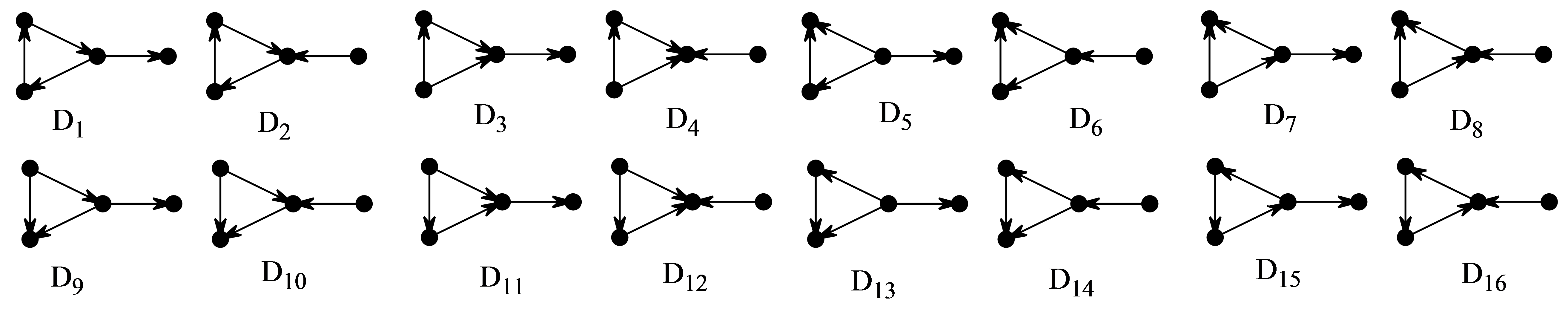}
     \end{center}
\vskip -0.5cm
\caption{ $D_i$,$i=1,2,\cdot\cdot\cdot,16$.}\label{fig-4}
\end{figure}

\begin{proof}
 Let $D\in \mathcal{O}(U_{4,2})$. Since each $uv\in E(U_{4,2})$, $uv$ has two orientations and $|E(U_{4,2})|=4 $, we
  have $|\mathcal{O}(U_{4,2})|=2^4=16$. Note that $\mathcal{O}(U_{4,2})=\{D_1,D_2,\cdot\cdot\cdot,D_{16}\}$  (see Figure $\ref{fig-4}$).\\
     \indent Clearly, we have
     $$M_{1}(D_1)=M_{1}(D_2)=M_{1}(D_{15})=M_{1}(D_{16})=5$$
     $$M_{1}(D_3)=M_{1}(D_6)=M_{1}(D_{11})=M_{1}(D_{14})=6$$
     $$M_{1}(D_7)=M_{1}(D_8)=M_{1}(D_9)=M_{1}(D_{10})=7$$
     $$M_{1}(D_4)=M_{1}(D_5)=M_{1}(D_{12})=M_{1}(D_{13})=8$$\\
     \indent Consequently, $M_{1}(D)\leq 8$, equality occurs if and only if $D\in \{D_4,D_5,D_{12},D_{13}\}\\=\{U^{(1)}_{4,2},U^{(2)}_{4,2},U^{(3)}_{4,2},U^{(4)}_{4,2}\}$.

\end{proof}

\begin{lemma}\label{lem7}%引理7
  Let $G_1$ be the graph formed by attaching a pendent vertex to each vertex of a triangle. Let $D\in \mathcal{O}(G_1)$. Then $$M_1(D)\leq 13$$
with equality if and only if  $D\in \{D_{12},D_{21},D_{23},D_{24},D_{28},D_{32},D_{33},D_{37},D_{41},D_{42},D_{44},D_{53}\}$
(see Figure $\ref{fig-5}$).
\end{lemma}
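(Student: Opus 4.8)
The plan is to compute $M_1(D)$ for orientations of the fixed graph $G_1$ directly, exploiting the small size and high symmetry of $G_1$. Since $G_1$ has $6$ vertices and $6$ edges (a triangle $abc$ with pendants $a',b',c'$), there are $2^6=64$ orientations, so in principle one could enumerate all of them; but I would organize the count using Lemma~\ref{lem6}, which gives $M_1(D)=\frac{1}{2}\sum_{u}[(d^+_u)^2+(d^-_u)^2]$. The pendant vertices $a',b',c'$ each have total degree $1$, so each contributes exactly $\frac12$ regardless of orientation, adding a fixed $\frac32$. The triangle vertices $a,b,c$ each have total degree $3$ (two triangle edges plus one pendant edge), and for a vertex of degree $3$ the quantity $(d^+)^2+(d^-)^2$ with $d^++d^-=3$ equals either $9$ (if $3{+}0$) or $5$ (if $2{+}1$). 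Hence $M_1(D)=\frac12\big(\frac32\cdot\text{(that's the pendant part)}+\sum_{v\in\{a,b,c\}}[(d^+_v)^2+(d^-_v)^2]\big)$, and to maximize we want as many of $a,b,c$ as possible to be "pure" (all-out or all-in, value $9$) rather than "mixed" (value $5$).

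The key combinatorial obstruction is the triangle: if all three of $a,b,c$ were pure sources or pure sinks simultaneously, the three triangle arcs would have to be consistently oriented, but a triangle cannot have all three vertices be sources (or all sinks), and more generally on the triangle the out-degrees $(d^+_a,d^+_b,d^+_c)$ restricted to the $3$ cyclic edges must sum to $3$, so they form a partition of $3$ into three parts from $\{0,1,2\}$ --- realizable patterns up to rotation are $(2,1,0)$ (the transitive tournament) and $(1,1,1)$ (the directed $3$-cycle). I would then analyze, for each of these two triangle-orientation types, how the three pendant edges can be oriented to make the corresponding full-degree vectors at $a,b,c$ as lopsided as possible. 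In the directed-$3$-cycle case every triangle vertex already has triangle-out-degree $1$ and triangle-in-degree $1$, so adding the pendant arc makes the degree vector either $(2,1)$ or $(1,2)$ --- always mixed, value $5$ each, total triangle contribution $15$, giving $M_1=\frac12(15+\frac32\cdot?)$; I must be careful here, I'd recompute: pendant contribution is $3\cdot\frac12\cdot2=3$ inside the bracket? No --- I'd just write $2M_1(D)=\sum_u[(d^+_u)^2+(d^-_u)^2]=3\cdot 1+\sum_{v}[(d^+_v)^2+(d^-_v)^2]$, so the cyclic-triangle case gives $2M_1=3+15=18$, i.e. $M_1=9$. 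In the transitive-triangle case the triangle-degree vectors are $(0{+}2,\;1{+}1,\;2{+}0)$ at the three vertices $=(2,1,0)$-out; adding a pendant arc: the vertex with triangle-out-degree $2$ becomes $(3,0)$ if its pendant is outgoing (value $9$) or $(2,1)$ if incoming (value $5$); the vertex with triangle-out-degree $0$ becomes $(0,3)$ if its pendant is incoming (value $9$) or $(1,2)$ if outgoing (value $5$); the middle vertex with triangle-degrees $(1,1)$ always gives value $5$. So the best we can do is $9+5+9=23$, giving $2M_1=3+23=26$, i.e. $M_1=13$, and this is strictly better than the cyclic case.

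I would then conclude: the maximum is $M_1(D)\le 13$, attained exactly when the triangle is oriented transitively and the pendant edges at the source-end and sink-end of the transitive triangle point "outward" and "inward" respectively (the pendant at the middle vertex being free). Counting these: there are $3$ cyclic rotations times $2$ reflections giving the $6$ transitive orientations of the triangle --- actually the transitive tournament on $3$ labeled vertices has $3!=6$ versions --- and for each, the two "extreme" pendants are forced while the middle pendant has $2$ choices, yielding $6\times 2=12$ maximizing orientations, matching the claimed list $\{D_{12},D_{21},\dots,D_{53}\}$ in Figure~\ref{fig-5}. The only genuine subtlety I anticipate is bookkeeping: making sure the reduction via Lemma~\ref{lem6} is applied correctly (each squared degree counted once, pendants contributing the fixed constant), verifying that no orientation with a mixed middle vertex can somehow compensate, and checking that the $12$ orientations listed in the figure are exactly these and pairwise non-isomorphic as labeled orientations. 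I do not expect any hard estimate --- the whole argument is a finite, structured case check driven by the identity in Lemma~\ref{lem6}.
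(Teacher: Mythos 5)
Your proposal is correct, and it reaches the same numerical conclusion by the same underlying identity (Lemma~\ref{lem6} reduces $2M_1(D)$ to $3+\sum_{v}[(d^{+}_v)^2+(d^{-}_v)^2]$ over the three triangle vertices, each term being $9$ if the vertex is pure source/sink and $5$ if mixed). The difference is in how the case analysis is justified. The paper simply enumerates all $64$ orientations in Figure~\ref{fig-5} and observes, by inspection, that they fall into three classes (zero, one, or two pure triangle vertices, giving $M_1=9,11,13$ respectively); it never explains why three pure triangle vertices are impossible, nor does it derive the count of $12$ extremal orientations other than by listing them. You supply exactly the missing structural argument: the triangle's internal orientation is either the directed $3$-cycle (all three vertices forced mixed, $M_1=9$) or a transitive tournament (out-degrees $2,1,0$ on the cycle edges, so the middle vertex is forced mixed and the other two can each be made pure by choosing the pendant arc outward at the source and inward at the sink), which caps the sum at $9+9+5=23$ and yields $M_1\le 13$; and your count $3!\times 2=12$ of maximizers follows from the $6$ transitive tournaments times the $2$ free choices at the middle pendant. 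This buys a proof that does not depend on trusting a $64$-panel figure, and it also silently covers the paper's Case~2 (one pure vertex, $M_1=11$) as the suboptimal transitive configurations. The only remaining bookkeeping is matching your $12$ abstract maximizers to the specific labels $D_{12},\dots,D_{53}$ in the figure, which is unavoidable if one wants the equality statement in exactly the paper's notation.
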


\begin{figure}[ht]
\begin{center}
  \includegraphics[width=15cm,height=10cm]{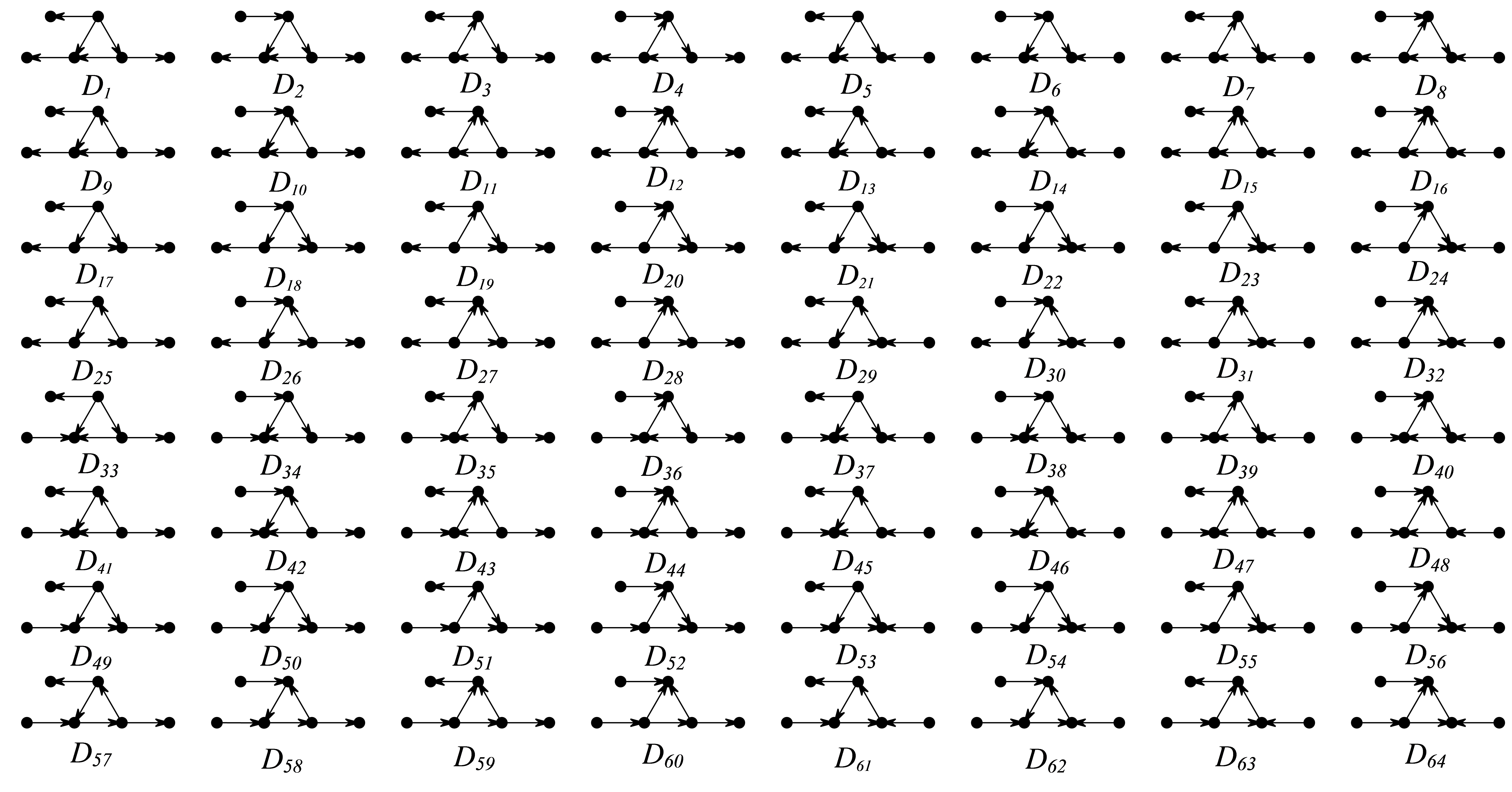}
     \end{center}
\vskip -0.5cm
\caption{ $D_i$, $i=1,2,\cdot\cdot\cdot,64$ in Lemma \ref{lem7} .}\label{fig-5}
\end{figure}

\begin{proof}
Note that $\mathcal{O}(G_1)=\{D_{i}|i=1,2,\cdot\cdot\cdot,64\}$ (see Figure $\ref{fig-5}$). \\
\indent Let $u_i$ be a pendent vertex and $v_i$  the unique neighbor of $u_i$ in $G_1$, where $i=1,2,3$. \\
\indent Obviously, all digraphs in Figure $\ref{fig-5}$ have $\{d^{+}_{u_{i}}=1,d^{-}_{u_{i}}=0\}$ or $\{d^{+}_{u_{i}}=0,d^{-}_{u_{i}}=1\}$, where $i=1,2,3$. By Lemma $\ref{lem6}$,
 \begin{align*}
 M_{1}(D_j)&=\frac{1}{2} \sum_{i=1}^{3}\left[(d^{+}_{D_j} (u_i))^2+(d^{-}_{D_j} (u_i))^2\right]+\frac{1}{2} \sum_{i=1}^{3}\left[(d^{+}_{D_j} (v_i))^2+(d^{-}_{D_j} v_i)^2\right]\\
 &=\frac{1}{2} [3+\sum_{i=1}^{3}\left[(d^{+}_{D_j} (v_i))^2+(d^{-} _{D_j}(v_i))^2\right]], 
 \end{align*}
where $j=1,2,\cdot\cdot\cdot,64$.
All digraphs in Figure $\ref{fig-5}$ can be divided  into three case:
\\ \indent Case 1.$\{d^{+}_{v_{i}}=2,d^{-}_{v_{i}}=1\}$ or $\{d^{+}_{v_{i}}=1,d^{-}_{v_{i}}=2\}$, where $i=1,2,3$.
This clearly implies that $M_{1}(D_2)=M_{1}(D_3)=M_{1}(D_4)=M_{1}(D_8)=M_{1}(D_7)=M_{1}(D_6)\\
=M_{1}(D_{13})=M_{1}(D_{14})=M_{1}(D_{15})=M_{1}(D_{18})=M_{1}(D_{25})=M_{1}(D_{26})\\=
M_{1}(D_{29})=M_{1}(D_{30})=M_{1}(D_{35})=M_{1}(D_{36})=M_{1}(D_{39})=M_{1}(D_{40})\\=
M_{1}(D_{47})=M_{1}(D_{50})=M_{1}(D_{51})=M_{1}(D_{52})=M_{1}(D_{57})=M_{1}(D_{58})\\=
M_{1}(D_{59})=M_{1}(D_{61})=M_{1}(D_{62})=M_{1}(D_{63})=9$
\\ \indent Case 2. There is a $v_i$ which satisfy $\{d^{+}_{v_{i}}=3,d^{-}_{v_{i}}=0\}$ or $\{d^{+}_{v_{i}}=0,d^{-}_{v_{i}}=3\}$, says $v_1$. $\{d^{+}_{v_{i}}=2,d^{-}_{v_{i}}=1\}$ or $\{d^{+}_{v_{i}}=1,d^{-}_{v_{i}}=2\}$, where $i=2,3$.
This clearly implies that $M_{1}(D_1)=M_{1}(D_5)=M_{1}(D_9)=M_{1}(D_{10})=M_{1}(D_{11})=M_{1}(D_{16})\\
=M_{1}(D_{17})=M_{1}(D_{19})=M_{1}(D_{20})=M_{1}(D_{22})=M_{1}(D_{27})=M_{1}(D_{31})\\=
M_{1}(D_{34})=M_{1}(D_{38})=M_{1}(D_{43})=M_{1}(D_{45})=M_{1}(D_{46})=M_{1}(D_{48})\\=
M_{1}(D_{49})=M_{1}(D_{54})=M_{1}(D_{55})=M_{1}(D_{56})=M_{1}(D_{60})=M_{1}(D_{64})=11$
\\ \indent Case 3. There are two $v_i$ satisfy $\{d^{+}_{v_{i}}=3, d^{-}_{v_{i}}=0\}$ or $\{d^{+}_{v_{i}}=0, d^{-}_{v_{i}}=3\}$, say $v_1$ and $v_2$. $\{d^{+}_{v_{3}}=1,d^{-}_{v_{3}}=2\}$ or $\{d^{+}_{v_{3}}=2,d^{-}_{v_{3}}=1\}$.
This clearly implies that
$M_{1}(D_{12})=M_{1}(D_{21})=M_{1}(D_{23})=M_{1}(D_{24})=M_{1}(D_{28})=M_{1}(D_{32})\\
=M_{1}(D_{33})=M_{1}(D_{37})=M_{1}(D_{41})=M_{1}(D_{42})=M_{1}(D_{44})=M_{1}(D_{53})=13$\\
\indent Consequantly, $M_1(D)\leq 13$, equality occurs if and only if $D\in \{D_{12},D_{21},D_{23},D_{24},\\D_{28},D_{32},D_{33},D_{37},D_{41},D_{42},D_{44},D_{53}\}.$
\end{proof}

We are now ready to give a proof of Theorem 2.\\%\label{the8}.\\
\textbf{Proof of Theorem 2}.
\begin{proof}
 We will prove by induction on $m$.\\
\indent If $m=2$, then either $G=C_4 $ or $G=U_{4,2}$, and by Lemma $\ref{lem2}$, $D\in \mathcal{O}(C_4)$, $M_1(D)\leq \frac{1}{2}M_1(C_4)=8=\frac{1}{2}(2^2+7\times 2-2)$, equality occurs if and only if $D$ is a sink-source orientation of $C_4$, i.e., $U^{(5)}_{4,2}$ or $U^{(6)}_{4,2}$. By Lemma $\ref{lem5}$, $D\in \mathcal{O}(U_{4,2})$, $M_1(D)\leq 8=\frac{1}{2}[2^2+7\times 2-2]$, equality occurs if and only if $D\in \{U^{(1)}_{4,2},U^{(2)}_{4,2},U^{(3)}_{4,2},U^{(4)}_{4,2}\}$. Consequently, $G\in U(4,2)$, $D\in \mathcal{O}(G)$, $M_1(D)\leq 8$, equality occurs if and only if $D\in \{U^{(1)}_{4,2},U^{(2)}_{4,2},U^{(3)}_{4,2},U^{(4)}_{4,2},U^{(5)}_{4,2}, U^{(6)}_{4,2}\}=U^{*}_{4,2}$.
The result holds.

 If $m \geq 3$. Suppose that the result holds for all orientations of unicyclic graphs in $ U(2(m - 1),m - 1)$. \\
\indent Let $G \in U(2m,m)$ with a perfect matching $M$.
If $G = C_{2m}$, then $D\in \mathcal{O}(C_{2m})$, by Lemma $\ref{lem2}$ and $\frac{1}{2}(m^2+7m-2)-4m=\frac{1}{2}(m^2-m-2)=\frac{1}{2}[(m-\frac{1}{2})^2-\frac{9}{4}]\geq 2>0$, $M_1(D)\leq \frac{1}{2}M_1(C_{2m})=4m <\frac{1}{2}[m^2+7m-2]$. The result holds.

Suppose that $G\neq C_{2m}$, we consider the following two cases.\\
\indent Case 1. Suppose that $G$ has a pendent vertex $u$ whose unique neighbor $v$ has degree two. Let $w\in N_{G}(v)$ and $w\neq u$.  Obviously, $d_{w}\geq 2$. Let $N_{G}(w) = \{v_1,v_2,\cdot\cdot\cdot,v_{s+1}\}$,
where $s \geq 1$ and $v_1 = v$. Then $M$ contains one of $wv_i$, $i = 2,3,\cdot\cdot\cdot,s+1$, say $wv_2$. Since the $s-1$ vertices $v_3,\cdot\cdot\cdot,v_{s+1}$ are $M$-saturated and at most two of them belong to the unicyclic component of $G-\{w\}$, we have $m\geq2+(s-2)=s$.
Then $G'=G-\{u,v\} \in U(2(m-1),m-1)$ and $M-\{uv\}$ is a perfect matching of $G'$. Let $D'\in \mathcal{O}(G')$ and $A(D')\bigcap A(D)=A(D')$, where $D\in \mathcal{O}(G)$.

By the induction hypothesis, it is obvious that  $M_1(D')\leq \frac{1}{2}[(m-1)^2+7(m-1)-2]$.\\
\indent
If $uv\in A(D)$, then $\frac{1}{2}[d_{D}^{+}(u)+d_{D}^{-}(v)]\leq \frac{1}{2}[d_{G}(u)+d_{G}(v)]$.
If $vu\in A(D)$, then $\frac{1}{2}[d_{D}^{-}(u)+d_{D}^{+}(v)]\leq \frac{1}{2}[d_{G}(u)+d_{G}(v)]$.
Hence, $\max \{\frac{1}{2}[d_{D}^{+}(u)+d_{D}^{-}(v)],\frac{1}{2}[d_{D}^{-}(u)+d_{D}^{+}(v)]\}\leq \frac{1}{2}[d_{G}(u)+d_{G}(v)]$ . Similarly to $vw\in A$ and $wv\in A$, and we have $\max \{\frac{1}{2}[d_{D}^{+}(v)+d_{D}^{-}(w)],\frac{1}{2}[d_{D}^{-}(v)+d_{D}^{+}(w)]\}\leq \frac{1}{2}[d_{G}(v)+d_{G}(w)]$.\\
  \indent If $vw\in A(D)$, then $d_{D}^{-}(w)=d_{D'}^{-}(w)+1$,$d_{D}^{+}(w)=d_{D'}^{+}(w)$. Since $A(D')\cap A(D)=A(D')$,   without lost of generality suppose that $d_{D}^{+}(v_i)=d_{D'}^{+}(v_i)$, where $i=2,3,\cdot\cdot\cdot,d_{D}^{-}(w)$.  $d_{D}^{-}(v_j)=d_{D'}^{-}(v_j)$, where  $j=d_{D}^{-}(w)+1,\cdot\cdot\cdot,d_{G}(w)$. Consequently    $d_{D}^{+}(v_i)+d_{D}^{-}(w)=d_{D'}^{+}(v_i)+d_{D'}^{-}(w)+1$,
  where  $i=2,3,\cdot\cdot\cdot,d_{D}^{-}(w)$.   $d_{D}^{-}(v_j)+d_{D}^{+}(w)=d_{D'}^{-}(v_j)+d_{D'}^{+}(w)$, where   $j=d_{D}^{-}(w)+1,\cdot\cdot\cdot,d_{G}(w)$. Similarly to $wv\in A(D)$. Thus

%\begin{align*}
%M_{1}(D)&=M_{1}(D')+\max \{\frac{1}{2}[d_{D}^{+}(u)+d_{D}^{-}(v)],\frac{1}{2}[d_{D}^{-}(u)+d_{D}^{+}(v)]\}+\max \{\frac{1}{2}[d_{D}^{+}(w)+d_{D}^{-}(v)],\frac{1}{2}[d_{D}^{-}(w)+d_{D}^{+}(v)]\}
%+\frac{1}{2}\max\{\sum_{i=2}^{d_{D}^{-}(w)}[d_{D}^{-}(v_i)+d_{D}^{+}(w)-(d_{D'}^{-}(v_i)+d_{D'}^{+}(w))]\}\\
%&+\sum_{j=d_{D}^{+}(w)+1}^{d_{G}(w)}[d_{D}^{+}(v_j)+d_{D}^{-}(w)-(d_{D'}^{+}(v_j)+d_{D'}^{-}(w))],
%\sum_{i=2}^{d_{D}^{-}(w)}[d_{D}^{+}(v_i)+d_{D}^{-}(w)-(d_{D'}^{+}(v_i)+d_{D'}^{-}(w))]
%+\sum_{j=d_{D}^{-}(w)+1}^{d_{G}(w)}[d_{D}^{-}(v_j)+d_{D}^{+}(w)-(d_{D'}^{-}(v_j)+d_{D'}^{+}(w))]}\\
%&\leq M_{1}(D')+\frac{1}{2}[d_{G}(u)+d_{G}(v)]+\frac{1}{2}[d_{G}(w)+d_{G}(v)]+\frac{1}{2}\max{d_{D}^{+}(w)-1,d_{D}^{-}(w)-1}\\
%&\leq M_{1}(D')+\frac{1}{2}[d_{G}(u)+d_{G}(v)]+\frac{1}{2}[d_{G}(w)+d_{G}(v)]+\frac{1}{2}(d_{G}(w)-1)\\
%&\leq M_{1}(D')+s+3\\
%&\leq \frac{1}{2}\left[(m-1)^{2}+7(m-1)-2\right]+m+3\\
%&=\frac{1}{2}\left[m^{2}+7m-2\right]\\

%\end{align*}

\begin{equation*}
\begin{aligned}
M_{1}(D)  \leqslant& M_{1}(D')+\max \{\frac{1}{2}[d_{D}^{+}(u)+d_{D}^{-}(v)],\frac{1}{2}[d_{D}^{-}(u)+d_{D}^{+}(v)]\}+\max \{\frac{1}{2}[d_{D}^{+}(w)+d_{D}^{-}(v)],\\
&\frac{1}{2}[d_{D}^{-}(w)+d_{D}^{+}(v)]\}
+\frac{1}{2}\max\{\sum_{i=2}^{d_{D}^{+}(w)}[d_{D}^{-}(v_i)+d_{D}^{+}(w)-(d_{D'}^{-}(v_i)+d_{D'}^{+}(w))]
\\
&+\sum_{j=d_{D}^{+}(w)+1}^{d_{G}(w)}[d_{D}^{+}(v_j)+d_{D}^{-}(w)-(d_{D'}^{+}(v_j)+d_{D'}^{-}(w))],
\sum_{i=2}^{d_{D}^{-}(w)}[d_{D}^{+}(v_i)+d_{D}^{-}(w)\\
&-(d_{D'}^{+}(v_i)+d_{D'}^{-}(w))]+\sum_{j=d_{D}^{-}(w)+1}^{d_{G}(w)}[d_{D}^{-}(v_j)+d_{D}^{+}(w)-(d_{D'}^{-}(v_j)+d_{D'}^{+}(w))]\}
\end{aligned}
\end{equation*}

\begin{equation*}
\begin{aligned}
& \leqslant M_{1}(D')+\frac{1}{2}[d_{G}(u)+d_{G}(v)]+\frac{1}{2}[d_{G}(w)+d_{G}(v)]+\frac{1}{2}\max\{d_{D}^{+}(w)-1,d_{D}^{-}(w)-1\}\\
& \leqslant M_{1}(D')+\frac{1}{2}[d_{G}(u)+d_{G}(v)]+\frac{1}{2}[d_{G}(w)+d_{G}(v)]+\frac{1}{2}(d_{G}(w)-1) \\
& \leqslant M_{1}(D')+s+3 \\
& \leqslant \frac{1}{2}\left[(m-1)^{2}+7(m-1)-2\right]+m+3 \\
& = \frac{1}{2}\left[m^{2}+7m-2\right], 
\end{aligned}
\end{equation*}
equality occurs if and only if $M_{1}(D')= \frac{1}{2}\left[(m-1)^{2}+7(m-1)-2\right]$ ,\\ $\max \{\frac{1}{2}[d_{D}^{+}(u)+d_{D}^{-}(v)],\frac{1}{2}[d_{D}^{-}(u)+d_{D}^{+}(v)]\}=\frac{1}{2}[d_{G}(u)+d_{G}(v)]$,  $\max \{\frac{1}{2}[d_{D}^{+}(w)+d_{D}^{-}(v)],\frac{1}{2}[d_{D}^{-}(w)+d_{D}^{+}(v)]\}=\frac{1}{2}[d_{G}(w)+d_{G}(v)]$, $\frac{1}{2}\max \{[d_{D}^{+}(w)-1],[d_{D}^{-}(w)-1]\}=\frac{1}{2} [d_{G}(w)-1]$ and $s=m$,
or equivalently, $D'\in U_{2(m-1),(m-1)}^{*}$ and $\{d_{D}^{+}(w)=m+1,d^{-}_{D}(v)=d_{G}(v),d^{+}_{D}(u)=d_{G}(u)\}$ or $\{d_{D}^{-}(w)=m+1,d^{+}_{D}(v)=d_{G}(v),d^{-}_{D}(u)=d_{G}(u)\}$, i.e. $D\in U_{2m,m}^{*}$. The result holds.

Case 2. Suppose that $G$ has a pendent vertex $u$ and $d_{v}\neq 2$ for $v\in N_{G}(u)$.  $C = v_{1}v_2...v_{t}v_{1}$ denotes the unique cycle of $G$. Since $M$  is a perfect matching of $G$, $G-V(C)$ consists of isolated vertices.

Subcase 2.1. If each vertex of $C$ is adjacent to a pendent vertex in $G$. Then $D\in \mathcal{O}(G)$. When $m\geq 4$, by Lemma $\ref{lem2}$ and $\frac{1}{2}[m^2+7m-2]-5m=\frac{1}{2}[m^2-3m-2]=\frac{1}{2}[(m-\frac{3}{2})^2-\frac{17}{8}]>0$, we have $M_1(D)\leq \frac{1}{2}M_1(G)=5m<\frac{1}{2}[m^2+7m-2]$.
When $m=3$, by Lemma $\ref{lem7}$, $M_1(D)\leq 13<14=\frac{1}{2}(3^2+7\times 3-2)$. The result holds.\\
\indent Subcase 2.2. Suppose that there is at least one vertex of degree two on $C$. Obviously, $d_{v_1}=2$ or 3.  Without lost of generality suppose that $d_{v_2}=3$ and $d_{v_3}=2$.
Let $u_{2}\in N_{G}(v_2)$ and $d_{u_2}=1$. Since $v_{2}u_{2} \in M$ and $v_3$ is $M$-saturated, we have $v_{3}v_{4}\in M$ and thus $d_{v_4}= 2$.
 Let $T'=G-\{v_2,u_2\}$. Then $T'\in T(2(m-1),m-1)$ and $M-\{u_{2}v_2\}$ is a perfect matching of $T'$.

 By Lemma $\ref{lem4}$, $T'\in T(2(m-1),m-1)$, $D'\in \mathcal{O}(T')$ and $A(D')\bigcap A(D)=A(D')$, where $D\in \mathcal{O}(G)$. Then $M_1(D')\leq \frac{1}{2}[(m-1)^2+5(m-1)-4]$.
 Thus

$M_{1}(D)\leq M_{1}(D') + max\{\frac{1}{2}[d^{+}_{D}(v_2)+d^{-}_{D}(u_2)], \frac{1}{2}[d^{-}_{D}(v_2)+d^{+}_{D}(u_2)]\} + max\{\frac{1}{2}[d^{+}_{D}(v_2)+d^{-}_{D}(v_1)], \frac{1}{2}[d^{-}_{D}(v_2)+d^{+}_{D}(v_1)]\} + max\{\frac{1}{2}[d^{+}_{D}(v_2)+d^{-}_{D}(v_3)], \frac{1}{2}[d^{-}_{D}(v_2)+d^{+}_{D}(v_3)]\} + \frac{1}{2}max\{d^{+}_{D}(v_1)-1, d^{-}_{D}(v_1)-1\} + \frac{1}{2}max\{d^{+}_{D}(v_3)-1, d^{-}_{D}(v_3)-1\}
\leq M_{1}(D') + \frac{1}{2}[ d_{G}(v_2)+d_{G}(u_2)] + \frac{1}{2}[ d_{G}(v_2)+d_{G}(v_1)] + \frac{1}{2}[ d_{G}(v_2)+d_{G}(v_3)] +
\frac{1}{2}(d_{G}(v_1)-1) + \frac{1}{2}(d_{G}(v_3)-1)
\leq \frac{1}{2}[(m-1)^{2}+5(m-1)-4+8+10]
=\frac{1}{2}[m^{2}+3m+10].$

 %\begin{align*}
%M_{1}(D)&\leq M_{1}(D')+\max {\frac{1}{2}[d_{D}^{+}(v_2)+d_{D}^{-}(u_2)],\frac{1}{2}[d_{D}^{-}(v_2)+d_{D}^{+}(u_2)]}+\max {\frac{1}{2}[d_{D}^{+}(v_2)+d_{D}^{-}(v_1)],\frac{1}{2}[d_{D}^{-}(v_2)+d_{D}^{+}(v_1)]}+\max {\frac{1}{2}[d_{D}^{+}(v_2)+d_{D}^{-}(v_3)],\frac{1}{2}[d_{D}^{-}(v_2)+d_{D}^{+}(v_3)]}
 %&\leq M_{1}(D')+ \frac{1}{2}[d_{G}(v_2)+d_{G}(u_2)]+\frac{1}{2}[d_{G}(v_2)+d_{G}(v_1)]+\frac{1}{2}[d_{G}(v_2)+d_{G}(v_3)]\\
 %&\leq \frac{1}{2}[(m-1)^2+5(m-1)-4+5+10]
 %&=\frac{1}{2}[m^2+3m+7]
 %\end{align*}
% \begin{equation*}
%\begin{aligned}
%M_{1}(D) & \leqslant M_{1}(D')+\max \{\frac{1}{2}[d_{D}^{+}(v_2)+d_{D}^{-}(u_2)],\frac{1}{2}[d_{D}^{-}(v_2)+d_{D}^{+}(u_2)]\}+\max \{\frac{1}{2}[d_{D}^{+}(v_2)\\
%&+d_{D}^{-}(v_1)],
%\frac{1}{2}[d_{D}^{-}(v_2)+d_{D}^{+}(v_1)]\}
%+\max \{\frac{1}{2}[d_{D}^{+}(v_2)+d_{D}^{-}(v_3)],\frac{1}{2}[d_{D}^{-}(v_2)+d_{D}^{+}(v_3)]\} \\
%& \leqslant M_{1}(D')+ \frac{1}{2}[d_{G}(v_2)+d_{G}(u_2)]+\frac{1}{2}[d_{G}(v_2)+d_{G}(v_1)]+\frac{1}{2}[d_{G}(v_2)+d_{G}(v_3)] \\
%& \leqslant \frac{1}{2}[(m-1)^2+5(m-1)-4+5+10]\\
%&=\frac{1}{2}[m^2+3m+7]
%\end{aligned}
%\end{equation*}

 Since $\frac{1}{2}[m^2+7m-2]-\frac{1}{2}[m^2+3m+10]=\frac{1}{2}[4m-12]\geq 0$, $M_{1}(D)\leq\frac{1}{2}[m^2+3m+10]\leq\frac{1}{2}[m^2+7m-2]$ with equality if and only if $D\in\{U^{(5)}_{6,3}, U^{(6)}_{6,3}\}$.
 Consequently, the result holds.
\end{proof}

\section{Proof of Theorem $\ref{the11}$}
In this section we give a proof of Theorem $\ref{the11}$. For this we need the following results:
\begin{lemma}\cite {AY04}\label{lem9}
 Let $G\in U(n,m)$ with $G \neq C_n$, where $n>2m$. Then there is a maximum matching $M$ of $G$ and a pendent vertex $u$ such that  is not  $M$-saturated.

\end{lemma}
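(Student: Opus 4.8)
The plan is to reduce the statement to the Gallai--Edmonds structure theorem. Writing $\nu(H)$ for the matching number of a graph $H$, let $D(G)$ be the set of vertices of $G$ missed by at least one maximum matching, $A(G)$ the set of vertices outside $D(G)$ with a neighbour in $D(G)$, and $C(G)=V(G)\setminus(D(G)\cup A(G))$. Since $n>2m$, the deficiency $n-2\nu(G)=n-2m$ is positive, so $D(G)\neq\emptyset$, the number of components of $G[D(G)]$ strictly exceeds $|A(G)|$, each component of $G[D(G)]$ is factor-critical, and no edge of $G$ joins $D(G)$ to $C(G)$. It is enough to prove that some single-vertex component of $G[D(G)]$ is pendent in $G$, since such a vertex is by definition missed by a maximum matching.

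The first step is to describe the components of $G[D(G)]$. A factor-critical graph on at least three vertices is $2$-edge-connected: no edge $e=ab$ can be a bridge, since then deleting $a$ would isolate the side of $e$ containing $b$, forcing that side to have even order, and symmetrically the other side has even order, contradicting the odd order of the graph. Hence such a component contains a cycle and cannot be a subgraph of a forest; as a subgraph of the unicyclic $G$ it must be the unique cycle $C_{0}$ of $G$, which is then odd. So every component of $G[D(G)]$ is a single vertex, with at most one exception, which (if present) is $C_{0}$.

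Now assume, for contradiction, that no single-vertex component of $G[D(G)]$ is pendent in $G$. Suppose first $G[D(G)]$ is edgeless. Then every $d\in D(G)$ has $d_{G}(d)\geq 2$, and since $D(G)$ is independent with no edge to $C(G)$, all neighbours of $d$ lie in $A(G)$. Thus the subgraph $H$ of $G$ with edge set the edges between $D(G)$ and $A(G)$ has $|E(H)|=\sum_{d\in D(G)}d_{G}(d)\geq 2|D(G)|$, while $H$, as a subgraph of a unicyclic graph, has $|E(H)|\leq|V(H)|=|D(G)|+|A(G)|\leq 2|D(G)|-1$ (here $G[D(G)]$ has $|D(G)|$ components, so $|A(G)|\leq|D(G)|-1$) --- a contradiction. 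In the remaining case one component of $G[D(G)]$ is the odd cycle $C_{0}$; then I would delete an edge $e$ of $C_{0}$ incident with a vertex $c^{*}$ of degree $\geq 3$ (such $c^{*}$ exists because $G\neq C_{n}$). A maximum matching of $G$ missing $c^{*}$ --- which exists since $c^{*}\in C_{0}\subseteq D(G)$ --- avoids $e$, so the tree $T:=G-e$ has $\nu(T)=\nu(G)=m$ and $n>2m$ vertices; applying the edgeless-$D$ argument to $T$ gives a leaf of $T$ in $D(T)$, and $D(T)\subseteq D(G)$ because $\nu(T)=\nu(G)$.

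I expect the real obstacle to be this final point: the leaf of $T$ delivered by the counting argument may be the endpoint of $e$ other than $c^{*}$, which is a leaf of $T$ but not of $G$ precisely when it has degree $2$ in $G$. Closing this gap calls either for a more careful choice of $e$, or for a separate argument that $D(G)$ still contains a true pendent vertex of $G$; I would supply the latter by a short parity analysis of a longest pendent path $P$ attached to $C_{0}$, showing that, according to the parity of $|E(P)|$, the path $P$ forces either a perfect matching of $G$ (impossible since $n>2m$) or its far endpoint into $D(G)$. As a fallback, the lemma can be obtained by induction on $n$: delete a pendent vertex $u$ with neighbour $v$; if some maximum matching misses $u$ we are done, and otherwise $uv$ lies in every maximum matching, $\nu(G-u)=m-1$, $G-u$ is connected and unicyclic with $n-1>2(m-1)$ and $G-u\not\cong C_{n-1}$ (since $\nu(C_{n-1})=m-1$ would force $n\leq 2m$), so the induction hypothesis applies to $G-u$ and one reattaches $uv$ --- the delicate case being that in which $v$ is already saturated by the matching supplied by induction.
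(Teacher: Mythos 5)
The paper offers no proof of this lemma at all --- it is imported from \cite{AY04} --- so there is nothing internal to compare against, and your Gallai--Edmonds argument is necessarily a different, self-contained route. Its skeleton is sound: the observation that a factor-critical component on at least three vertices is $2$-edge-connected and hence can only be the unique cycle $C_{0}$ of $G$ is correct, and the counting argument in the edgeless case, $2|D(G)|\leq |E(H)|\leq |D(G)|+|A(G)|\leq 2|D(G)|-1$, is a complete contradiction. The one genuine gap is exactly the one you flag in the case $C_{0}\subseteq D(G)$: the vertex of $D(T)$ produced by the counting argument in $T=G-e$ could be the endpoint $w$ of $e$ other than $c^{*}$, which is a leaf of $T$ but need not be pendent in $G$. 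As written the proof is therefore incomplete, since both proposed repairs (the parity analysis of a pendent path, and the induction whose case $u'=v$ you leave open) are only sketched.

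However, neither repair is needed; the gap closes in one line. Since $T$ is a tree, the bipartite subgraph $H$ between $D(T)$ and $A(T)$ is a forest, so $|E(H)|\leq |V(H)|-1\leq |D(T)|+|A(T)|-1\leq 2|D(T)|-2$, one better than the unicyclic bound you used for $G$. On the other side, suppose for contradiction that no vertex of $D(T)$ is pendent in $G$; degrees in $T$ differ from those in $G$ only at $c^{*}$ and $w$, and $d_{T}(c^{*})=d_{G}(c^{*})-1\geq 2$, so every $d\in D(T)$ has $d_{T}(d)\geq 2$ with the single possible exception $d=w$, where still $d_{T}(w)\geq 1$. Hence $|E(H)|=\sum_{d\in D(T)}d_{T}(d)\geq 2|D(T)|-1$, contradicting the upper bound. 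Thus some vertex of $D(T)\subseteq D(G)$ is pendent in $G$ and is missed by a maximum matching of $G$. With that observation inserted, your proof is correct and complete.
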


\begin{lemma}\label{lem10}
 Let $n$ and $m$ be integers with $2\leq m\leq \lfloor\frac{n}{2}\rfloor$ and $n>2m$. Then $$\frac{1}{2}\left[n^{2}+(-2m+3)n+m^{2}+m-2\right]>2n$$

\end{lemma}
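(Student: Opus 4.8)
The plan is to reduce the claimed inequality to the strict positivity of a single quadratic polynomial in $n$ (with $m$ treated as a parameter) and then to factor that quadratic explicitly. Multiplying both sides by $2$ and subtracting $4n$, the inequality $\frac{1}{2}[n^{2}+(-2m+3)n+m^{2}+m-2]>2n$ is equivalent to
$$n^{2}-(2m+1)n+m^{2}+m-2>0 .$$
So it suffices to show that $f(n)=n^{2}-(2m+1)n+(m^{2}+m-2)$ is strictly positive whenever $m\ge 2$ and $n>2m$.

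The key step is to observe that, viewing $f$ as a quadratic in $n$, its discriminant equals $(2m+1)^{2}-4(m^{2}+m-2)=9$, a perfect square independent of $m$. Hence $f$ has the two real roots $\frac{(2m+1)\pm 3}{2}$, namely $m-1$ and $m+2$, and therefore factors as
$$f(n)=(n-m+1)(n-m-2).$$
(One can verify this factorization directly by expansion.)

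It then remains only to check that both factors are positive under the hypotheses. From $n>2m$ we get $n\ge 2m+1$, so $n-m+1\ge m+2\ge 4>0$ and, using $m\ge 2$, also $n-m-2\ge m-1\ge 1>0$. Consequently $f(n)=(n-m+1)(n-m-2)\ge 4>0$, which yields the claim. I do not expect any genuine obstacle here: the only point that requires care is spotting the factorization of $f$, after which the sign analysis is immediate from $n>2m$ and $m\ge 2$.
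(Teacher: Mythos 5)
Your proof is correct. You reduce the inequality to the positivity of the quadratic $f(n)=n^{2}-(2m+1)n+m^{2}+m-2$, whose discriminant is the perfect square $9$, giving the factorization $f(n)=(n-m+1)(n-m-2)$; since $n\ge 2m+1$ and $m\ge 2$ force $n-m+1\ge m+2>0$ and $n-m-2\ge m-1>0$, the claim follows. This is a genuinely different route from the paper's: the authors instead treat $f(n,m)=\frac{1}{2}[n^{2}+(-2m+3)n+m^{2}+m-2]-2n$ as a function of the real parameter $m$, note $\frac{\partial f}{\partial m}=\frac{1}{2}(2m+1-2n)<0$, and then evaluate $f$ at the extreme value of $m$ permitted by the hypotheses, splitting into the cases $n$ even ($m\le\frac{n}{2}-1$) and $n$ odd ($m\le\frac{n-1}{2}$). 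Your argument is purely algebraic, avoids calculus and the parity case split, and yields the stronger quantitative bound $f(n)\ge(m+2)(m-1)\ge 4$; the paper's monotonicity argument is a more generic technique that would still work if the discriminant were not a perfect square, but here it is the longer and (as written) slightly more error-prone of the two.
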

\begin{proof}
Let$$f(n,m)=\frac{1}{2}\left[n^{2}+(-2 m+3) n+m^{2}+m-2\right]-2n$$
then $$\frac{\partial f}{\partial m}=\frac{1}{2}(2m+1-2n)<0$$
When $n$ is even, $\lfloor\frac{n}{2}\rfloor=\frac{n}{2}$. Since $n>2m$ i.e., $m<\frac{n}{2}$,  $2\leq m<\frac{n}{2}$.
Hence $f(n,m)\geq f(n,\frac{n-2}{2})$.
Let $h(n)=f(n,\frac{n-2}{2})=\frac{1}{8}n^2+\frac{1}{4}n-1$. Since $h'(n)=\frac{n}{4}+\frac{1}{4}> 0$,  $h(n)\geq h(5)=\frac{27}{8}>0$.
Consequently, $f(n,m)\geq f(n,\frac{n}{2}-1)>0$, i.e. $\frac{1}{2}\left[n^{2}+(-2 m+3) n+m^{2}+m-2\right]>2n$.

When $n$ is odd, $\lfloor\frac{n}{2}\rfloor=\frac{n-1}{2}$.
 Since $f(n,\frac{n-1}{2})=\frac{1}{2}\left[n^{2}+(4-n) n+\frac{1}{4}(n-1)^{2}+\frac{n-1}{2}-2\right]-2n=-\frac{9}{8}+\frac{n^2}{8} $ and $n\geq 2m \geq4$,
  $f(n,\frac{n-1}{2})\geq 2-\frac{8}{9}>0$.
 Consequently, the results holds.
\end{proof}

We are now ready to give a proof of Theorem $\ref{the11}$.\\
\textbf{Proof of Theorem \ref{the11}}.
\begin{proof} 
We will prove by induction on $n$.

If $n = 2m$,  by Theorem 1, the result holds.

 If $n > 2m$. Suppose that the result holds for orientations of all unicyclic graphs on less than $n$ vertices.

  Let $G\in U(n, m)$. If $G=C_n$, then $D\in \mathcal{O}(C_n)$, by Lemma $\ref{lem2}$ and Lemma $\ref{lem10}$, $M_{1}(D)\leq \frac{M_{1}(C_n)}{2}=2n< \frac{1}{2}\left[n^{2}+(-2 m+3) n+m^{2}+m-2\right]$. The result holds.

 If $G\neq C_n$. By Lemma $\ref{lem9}$, $G$ has a maximum matching $M$ and a pendent vertex $u$ such that $u$ is not $M$-saturated. Then $G'=G-\{u\}\in U(n- 1,m)$. Let $D'\in\mathcal{O}(G')$ and $A(D')\cap A(D)=A(D')$.

By the induction hypothesis, it is obvious that $$M_{1}(D')\leq \frac{1}{2}\left[(n-1)^{2}+(-2 m+3) (n-1)+m^{2}+m-2\right].$$
\indent Let $v\in N_{G}(u)$ and $N_{G}(v)=\{u_1,u_2,\cdot\cdot\cdot,u_{s+1}\}$, where $s\geq 1$ and $u_1=u$. Since $M$ contains at most one of the edges $vu_i$ for $i = 2,3,\cdot\cdot\cdot,s+1$ and there are $n-m$ edges of $G$ outside $M$, it is obvious that $s\leq n-m$.\\
\indent If $uv\in A(D)$, then $\frac{1}{2}[d_{D}^{+}(u)+d_{D}^{-}(v)]\leq \frac{1}{2}[d_{G}(u)+d_{G}(v)]$. If $vu\in A(D)$, then $\frac{1}{2}[d_{D}^{-}(u)+d_{D}^{+}(v)]\leq \frac{1}{2}[d_{G}(u)+d_{G}(v)]$. Hence, $\max \{\frac{1}{2}[d_{D}^{+}(u)+d_{D}^{-}(v)],\frac{1}{2}[d_{D}^{-}(u)+d_{D}^{+}(v)]\}\leq \frac{1}{2}[d_{G}(u)+d_{G}(v)]$.\\
 \indent If $uv\in A(D)$, then $d_{D}^{-}(v)=d_{D'}^{-}(v)+1$, $d_{D}^{+}(v)=d_{D'}^{+}(v)$. Since $A(D')\cap A(D)=A(D')$,  without lost of generality suppose that $d_{D}^{+}(u_i)=d_{D'}^{+}(u_i)$, where $i=2,\cdot\cdot\cdot,d_{D}^{-}(v)$;  $d_{D}^{-}(u_j)=d_{D'}^{-}(u_j)$, where $j=d_{D}^{-}(v)+1,\cdot\cdot\cdot,d_{G}(v)$, we have    $d_{D}^{+}(u_i)+d_{D}^{-}(v)=d_{D'}^{+}(u_i)+d_{D'}^{-}(v)+1$, where $i=2,\cdot\cdot\cdot,d_{D}^{-}(v)$;  $d_{D}^{-}(u_j)+d_{D}^{+}(v)=d_{D'}^{-}(u_j)+d_{D'}^{+}(v)$, where   $j=d_{D}^{-}(v)+1,\cdot\cdot\cdot,d_{G}(v)$.

  Similarly to $vu\in A(D)$.
Thus

%\begin{equation}
%\begin{aligned}
%M_{1}(D)&\leqslant M_{1}(D')+\max {\frac{1}{2}[d_{D}^{+}(u)+d_{D}^{-}(v)],\frac{1}{2}[d_{D}^{-}(u)+d_{D}^{+}(v)]}
%+\max{\frac{1}{2}\sum_{i=2}^{d_{D}^{-}(v)}%[d_{D}^{-}(u_i)+d_{D}^{+}(v)-(d_{D'}^{-}(u_i)+d_{D'}^{+}(v))]\\
%+\frac{1}{2}\sum_{j=d_{D}^{+}(v)+1}^{d_{G}(v)}[d_{D}^{+}(u_j)+d_{D}^{-}(v)-(d_{D'}^{+}(u_j)+d_{D'}^{-}(v))],
%\frac{1}{2}\sum_{i=2}^{d_{D}^{-}(v)}[d_{D}^{+}(u_i)+d_{D}^{-}(v)-(d_{D'}^{+}(u_i)+d_{D'}^{-}(v))]
%+\frac{1}{2}\sum_{j=d_{D}^{-}(v)+1}^{d_{G}(v)}[d_{D}^{-}(u_j)+d_{D}^{+}(v)-(d_{D'}^{-}(u_j)+d_{D'}^{+}(v))]}\\
 %&\leqslant M_{1}(D')+\frac{1}{2}[d_{G}(u)+d_{G}(v)]+\frac{1}{2}\max{d_{D}^{+}(v)-1,d_{D}^{-}(v)-1}
 %&\leq M_{1}(D')+\frac{1}{2}[d_{G}(u)+d_{G}(v)]+\frac{1}{2}(d_{G}(v)-1)\\
%&\leq M_{1}(D')+s+1\\
% &\leq \frac{1}{2}\left[(n-1)^{2}+(-2m+3)(n-1)+m^{2}+m-2\right]+n-m+1 \\
%&\leq \frac{1}{2}\left[n^{2}+(-2 m+3)n+m^{2}+m-2\right]

%\end{aligned}
%\end{equation}
\begin{equation*}
\begin{aligned}
M_{1}(D) \leqslant& M_{1}(D')+\max \{\frac{1}{2}[d_{D}^{+}(u)+d_{D}^{-}(v)],\frac{1}{2}[d_{D}^{-}(u)+d_{D}^{+}(v)]\}+\max\{\frac{1}{2}\sum_{i=2}^{d_{D}^{+}(v)}[d_{D}^{-}(u_i)\\
&+d_{D}^{+}(v)-(d_{D'}^{-}(u_i)+d_{D'}^{+}(v))]+\frac{1}{2}\sum_{j=d_{D}^{+}(v)+1}^{d_{G}(v)}[d_{D}^{+}(u_j)+d_{D}^{-}(v)-(d_{D'}^{+}(u_j)+d_{D'}^{-}(v))],\\
&\frac{1}{2}\sum_{i=2}^{d_{D}^{-}(v)}[d_{D}^{+}(u_i)+d_{D}^{-}(v)-(d_{D'}^{+}(u_i)+d_{D'}^{-}(v))]
+\frac{1}{2}\sum_{j=d_{D}^{-}(v)+1}^{d_{G}(v)}[d_{D}^{-}(u_j)
+d_{D}^{+}(v)\\
&-(d_{D'}^{-}(u_j)+d_{D'}^{+}(v))]\} \\
\leqslant& M_{1}(D')+\frac{1}{2}[d_{G}(u)+d_{G}(v)]+\frac{1}{2}\max\{d_{D}^{+}(v)-1,d_{D}^{-}(v)-1\}\\ 
\leqslant& M_{1}(D')+\frac{1}{2}[d_{G}(u)+d_{G}(v)]+\frac{1}{2}(d_{G}(v)-1) \\
\leqslant& M_{1}(D')+s+1 \\
\leqslant& \frac{1}{2}\left[(n-1)^{2}+(-2m+3)(n-1)+m^{2}+m-2\right]+n-m+1\\
 =& \frac{1}{2}\left[n^{2}+(-2 m+3)n+m^{2}+m-2\right]
\end{aligned}
\end{equation*}
with equality if and only if $M_{1}(D')= \frac{1}{2}\left[(n-1)^{2}+(-2 m+3) (n-1)+m^{2}+m-2\right]$, $\max \{\frac{1}{2}[d_{D}^{+}(u)+d_{D}^{-}(v)],\frac{1}{2}[d_{D}^{-}(u)+d_{D}^{+}(v)]\}=\frac{1}{2}[d_{G}(u)+d_{G}(v)]$, $\frac{1}{2}\max \{[d_{D}^{+}(v)-1],[d_{D}^{-}(v)-1]\}=\frac{1}{2} [d_{G}(v)-1]$ and $s=n-m$,
or equivalently, $D'\in U_{n-1,m}^{*}$ and $\{d_{D}^{+}(u)=d_{G}(u),d_{D}^{-}(v)=d_{G}(v)\}$ or $\{d_{D}^{+}(v)=d_{G}(v),d_{D}^{-}(u)=d_{G}(u)\}$, i.e. $D\in U_{n,m}^{*}$. The result holds.
\end{proof}

\textbf{Acknowledgment}.
 This work is supported by the Hunan Provincial Natural Science Foundation of China (2020JJ4423), the Department of Education of Hunan Province (19A318) and the National Natural Science Foundation of China (11971164).

%\end{document}

\end{document}